\title[Sobolev extensions of Lipschitz mappings]{Sobolev extensions of Lipschitz mappings into metric spaces}
\author{Scott Zimmerman}
\address{S.\ Zimmerman: Department of Mathematics, University of Connecticut, 
	341 Mansfield Road U1009, Storrs, Connecticut 06269, USA, {\tt srz5@pitt.edu}}
\thanks{This work was supported by the NSF grant DMS-1500647 of Piotr Haj\l{}asz.}
\keywords{Heisenberg group, Sobolev extension}
\subjclass[2010]{Primary 49Q15, 53C17; Secondary 46E35, 54C20}
\newtheorem{theorem}{Theorem}
\newtheorem{lemma}[theorem]{Lemma}
\newtheorem{corollary}[theorem]{Corollary}
\newtheorem{proposition}[theorem]{Proposition}
\def\diam{{\rm diam\,}}
\theoremstyle{definition}
\newtheorem{definition}[theorem]{Definition}
\newcommand{\barint}{
\rule[.036in]{.12in}{.009in}\kern-.16in \displaystyle\int }
\newcommand{\barcal}{\mbox{$ \rule[.036in]{.11in}{.007in}\kern-.128in\int $}}
\def\diam{\operatorname{diam}}
\def\mvint_#1{\mathchoice
          {\mathop{\vrule width 6pt height 3 pt depth -2.5pt
                  \kern -8pt \intop}\nolimits_{\kern -3pt #1}}%
%%%% P.S., 01/03/2001
% old definition had ...\nolimits_{#1}}
% \kern -3pt makes nicer distances between the integral sign
% and the domain of integration
%%%%
          {\mathop{\vrule width 5pt height 3 pt depth -2.6pt
                  \kern -6pt \intop}\nolimits_{#1}}%
          {\mathop{\vrule width 5pt height 3 pt depth -2.6pt
                  \kern -6pt \intop}\nolimits_{#1}}%
          {\mathop{\vrule width 5pt height 3 pt depth -2.6pt
                  \kern -6pt \intop}\nolimits_{#1}}}
\numberwithin{theorem}{section} \numberwithin{equation}{section}
\begin{document}

\begin{abstract}
Wenger and Young proved that the pair $(\mathbb{R}^m,\mathbb{H}^n)$ has the Lipschitz extension property for $m \leq n$
where $\mathbb{H}^n$ is the sub-Riemannian Heisenberg group. 
That is, for some $C>0$, any $L$-Lipschitz map from a subset of $\mathbb{R}^m$ into $\mathbb{H}^n$
can be extended to a $CL$-Lipschitz mapping on $\mathbb{R}^m$.
In this paper, we construct Sobolev extensions of such Lipschitz mappings with no restriction on the dimension $m$.
We prove that any Lipschitz mapping from a compact subset of $\mathbb{R}^m$ into $\mathbb{H}^n$
may be extended to a Sobolev mapping on any bounded domain containing the set.
More generally, we prove this result in the case of mappings into any Lipschitz $(n-1)$-connected metric space.
\end{abstract}

\maketitle

\section{Introduction}

A pair of metric spaces $(X,Y)$ has the \emph{Lipschitz extension property} 
if there is a constant $C>0$ so that any $L$-Lipschitz mapping $f:A \to Y$, $A \subset X$ 
has a $CL$-Lipschitz extension $F:X \to Y$.
Recall that a mapping $f:X\to Y$ between metric spaces is $L$-Lipschitz for some $L>0$ 
if $d_Y(f(x),f(y)) \leq L d_X(x,y)$ for all $x,y \in X$.
Extensive research has been conducted in the area of Lipschitz extensions.
See, for example, \cite{brudnyi,fassler,extend_banach,lang_sch,lang_schr,magnani,extend_book,wenger_young}.
Wenger and Young \cite{wenger_young} showed that $(\mathbb{R}^m,\mathbb{H}^n)$ 
has the Lipschitz extension property for $m \leq n$
where $\mathbb{H}^n$ is the sub-Riemannian Heisenberg group of topological dimension $2n+1$ (see Section \ref{sec_heis} for definitions).
More generally, the authors proved that $(X,\mathbb{H}^n)$ 
has the Lipschitz extension property
as long as the Assouad-Nagata dimension of $X$ is at most $n$ (see \cite{assouad,lang_sch,wenger_young}).
For such metric spaces $X$, Lang and Schlichenmaier \cite{lang_sch} showed that,
when $Y$ is any Lipschitz $(n-1)$-connected metric space,
there is a constant $C>0$ so that any $L$-Lipschitz mapping $f:A \to Y$ 
defined on a closed subset $A \subset X$ 
has a $CL$-Lipschitz extension $F:X \to Y$.
A metric space $Y$ is 
\emph{Lipschitz} $(n-1)$-\emph{connected} if there is a constant $\gamma \geq 1$ so that 
any $L$-Lipschitz map $f:S^k \to Y$ ($L\geq 0$) on the $k$-dimensional sphere has a 
$\gamma L$-Lipschitz extension $F:B^{k+1} \to Y$ on the $(k+1)$-ball for $k =0,1,\dots,n-1$. 
The result of Wenger and Young follows immediately if one proves the Lipschitz $(n-1)$-connectivity of $\mathbb{H}^n$.
As Wenger and Young mentioned, however, proving this property for $\mathbb{H}^n$ is difficult,
and thus they provided a direct proof of their Lipschitz extension result.
As a consequence, the metric space $\mathbb{H}^n$ is indeed Lipschitz $(n-1)$-connected.

What happens, however, when the dimension of the domain is large?
As Balogh and F\"{a}ssler \cite{rekt} showed,
the pair $(\mathbb{R}^m,\mathbb{H}^n)$
does not have the Lipschitz extension property when $m > n$.
Indeed, there is a bi-Lipschitz embedding of the sphere $S^n$ into $\mathbb{H}^n$,
and one can show that this embedding does not admit a Lipschitz extension to the ball $B^{n+1}$.
Since $B^{n+1}$ can be regarded as a subset of $\mathbb{R}^m$ for any $m > n$, 
the result follows.
(See also Theorems 1.5 and 1.6 in \cite{haj_spheres} for a shorter proof.)
This result was extended to include mappings into and between jet space Carnot groups 
in \cite{jet_space}.

In this paper, we consider Sobolev extensions of Lipschitz mappings $f:A \to \mathbb{H}^n$, $A \subset \mathbb{R}^m$.
Since Sobolev mappings form a larger class than Lipschitz mappings,
it turns out that, in the Sobolev case, we no longer have any restriction on the dimension of the domain.
The first main result of the paper is stated here.
Throughout the paper, a \emph{domain} $\Omega$ in $\mathbb{R}^m$ will be an open, connected set $\Omega \subset \mathbb{R}^m$.

\begin{theorem}
\label{main}
Fix $m,n \in \mathbb{N}$.
Suppose $Z \subset \mathbb{R}^m$ is compact 
and $\Omega$ is a bounded domain in $\mathbb{R}^m$ 
with $Z \subset \Omega$.
For $1 \leq p < n+1$ and any $L$-Lipschitz mapping $f:Z \to \mathbb{H}^n$, $L \geq 0$,
there exists a mapping
$F:\Omega \to \mathbb{H}^n$ in the class 
$W^{1,p}(\Omega,\mathbb{H}^n)$ 
such that $F(x) = f(x)$ for all $x \in Z$.

\noindent Moreover, there is a constant $C>0$ depending only on $m$, $n$, and $p$ such that,
if we write $F=(F_1,\dots,F_{2n},F_{2n+1})$, 
then $\Vert\partial F_j / \partial x_k\Vert_{L^p(\Omega)} \leq CL \left( \diam(\Omega) \right)^{m/p}$ for $k = 1,\dots,m$ and $j=1,\dots,2n$.
\end{theorem}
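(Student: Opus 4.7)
\emph{Strategy.} The plan is to build $F$ via a skeletal construction on a Whitney decomposition of $\Omega\setminus Z$: use the Lipschitz $(n-1)$-connectivity of $\mathbb{H}^n$ (which follows from the Wenger--Young theorem cited above) to extend $f$ onto the $n$-skeleton of this decomposition, and then to fill in the interior of higher-dimensional cells by a radial retraction whose gradient fails $L^\infty$ control but is in $L^p$ precisely when $p<n+1$.

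\emph{Step 1: extension on the $n$-skeleton.} Decompose $\Omega\setminus Z$ into a Whitney family $\{Q\}$ of dyadic cubes with side length $\ell_Q$ comparable to $\dist(Q,Z)$. For each vertex $v$ of the resulting cubical complex, choose a nearest point $\pi(v)\in Z$ and set $F(v):=f(\pi(v))$. For two adjacent vertices $v,w$ of a cube $Q$, the triangle inequality gives $|\pi(v)-\pi(w)|\le C\ell_Q\le C'|v-w|$, hence $d_{\mathbb{H}^n}(F(v),F(w))\le CL|v-w|$. Since $\mathbb{H}^n$ is Lipschitz $(n-1)$-connected with some constant $\gamma$, inductively extend $F$ from the $k$-skeleton to the $(k+1)$-skeleton for $k=0,\dots,n-1$, working face-by-face so that the extensions glue consistently across shared faces. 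The result is a map on the full $n$-skeleton which is $\gamma^n CL$-Lipschitz uniformly across all Whitney cubes.

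\emph{Step 2: radial retraction and $L^p$ bound.} For each Whitney cube $Q$, build a retraction $\rho_Q\colon Q\setminus\mathscr{C}_Q\to\mathrm{Sk}_n(Q)$ onto the $n$-skeleton of $Q$, where $\mathscr{C}_Q$ is the codimension-$(n+1)$ ``dual core'' assembled from centers of the faces of $Q$ of dimension $\ge n+1$. Built by iterated radial projection outward from these centers, $\rho_Q$ satisfies $|\nabla\rho_Q(x)|\le C\,\ell_Q/\dist(x,\mathscr{C}_Q)$, and restricts on each boundary face to the analogous retraction of that face, ensuring compatibility across shared faces of neighboring cubes. A Fubini-type computation yields
\[
\int_Q \dist(x,\mathscr{C}_Q)^{-p}\,dx\lesssim \ell_Q^{m-n-1}\int_0^{\ell_Q}r^{n-p}\,dr\lesssim \frac{\ell_Q^{m-p}}{n+1-p},
\]
finite precisely when $p<n+1$. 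Define $F:=F|_{\mathrm{Sk}_n(Q)}\circ\rho_Q$ on each $Q$. By the chain rule (Lipschitz outer, Sobolev inner), the horizontal components satisfy $|\nabla F_j|\le CL|\nabla\rho_Q|$ for $j=1,\dots,2n$, so $\int_Q|\nabla F_j|^p\le CL^p\ell_Q^m$. Summing over Whitney cubes,
\[
\|\partial F_j/\partial x_k\|_{L^p(\Omega)}^p \le CL^p\sum_Q \ell_Q^m\le CL^p|\Omega|\le CL^p\,\diam(\Omega)^m,
\]
which is the claimed estimate.

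\emph{Boundary values, Sobolev membership, and main obstacle.} For $x$ in a Whitney cube $Q$ near $Z$ we have $F(x)=f(\pi(w))$ for some vertex $w$ of $Q$ with $|x-\pi(w)|\le C\dist(x,Z)$; continuity of $f$ then gives $F(x)\to f(z)$ as $x\to z\in Z$, so $F|_Z=f$ and $F$ is continuous across $Z$. Off the measure-zero set $\bigcup_Q\mathscr{C}_Q$, the map $F$ is locally Lipschitz into $\mathbb{H}^n$, so the contact equation characterizing $W^{1,p}(\Omega,\mathbb{H}^n)$ is inherited a.e.\ from the horizontality of Lipschitz maps into the sub-Riemannian Heisenberg group. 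The principal obstacle is the explicit construction of the retractions $\rho_Q$ with the pointwise bound $|\nabla\rho_Q|\le C\,\ell_Q/\dist(\cdot,\mathscr{C}_Q)$, glued canonically across faces, and verifying that the resulting composition lies in $W^{1,p}(\Omega,\mathbb{H}^n)$ with the stated estimate. The exponent $p<n+1$ is sharp in this approach, reflecting the critical integrability threshold for $\dist(\cdot,\mathscr{C}_Q)^{-p}$ against a codimension-$(n+1)$ set; this is precisely the threshold past which Lipschitz extension from the $n$-skeleton into a Lipschitz $(n-1)$-connected target can no longer be performed with uniform control.
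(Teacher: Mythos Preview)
Your strategy matches the paper's almost exactly: Whitney decomposition of $\Omega\setminus Z$, extension of $f$ to the $n$-skeleton via the Lipschitz $(n-1)$-connectivity of $\mathbb{H}^n$, then composition with an iterated radial projection down to the $n$-skeleton, with the threshold $p<n+1$ arising from integrability of the distance to the codimension-$(n+1)$ singular set.

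Two points of departure are worth flagging. First, your claim that the retractions $\rho_Q$ ``restrict on each boundary face to the analogous retraction of that face'' is not true as stated for Whitney \emph{cubes}: adjacent Whitney cubes can differ in side length by a factor of two, so a face of a large cube is the union of several faces of smaller neighbors, and the single-face radial retraction on the large side does not match the piecewise retractions on the small side. The paper handles this by first passing to a Whitney \emph{triangulation} (Lemma~\ref{triangulate}), in which any two simplices intersect in a common face of both; the radial projections from barycenters are then automatically compatible. Your argument goes through once you make this correction. Second, rather than verifying $F\in W^{1,p}(\Omega,\mathbb{R}^{2n+1})$ and the weak contact equation directly, the paper first proves the more general Theorem~\ref{main2} showing $F\in AR^{1,p}(\Omega,Y)$ for any Lipschitz $(n-1)$-connected target (via a careful ACL argument along lines, including those that cross $Z$), and then deduces Theorem~\ref{main} from Proposition~\ref{heisenberg_sobolev}, which says that bounded maps in $AR^{1,p}(\Omega,\mathbb{H}^n)$ lie in $W^{1,p}(\Omega,\mathbb{H}^n)$. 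Your direct route is fine in outline, but you have not supplied the verification that $F_{2n+1}\in W^{1,p}(\Omega)$ (which requires boundedness of the horizontal components together with the contact equation) nor the absolute continuity of $F$ along lines crossing $Z$; the paper's $AR^{1,p}$ packaging absorbs these details cleanly.
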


We will also see from the construction that the extension $F$
is in the class $ACL^p(\Omega,\mathbb{R}^{2n+1})$.
(See Section~\ref{sec_sob} for the appropriate definitions.)
%Thus this theorem is meaningful even when $Z$ has Lebesgue measure zero.
Note that the bounds in Theorem~\ref{main} are given only for $j < 2n+1$. 
Such a condition follows naturally from the sub-Riemannian geometry of the Heisenberg group.
A brief explanation of this follows Definition~\ref{definition} in Section~\ref{sec_sob}.

If $m \leq n$, then $f$ admits a Lipschitz extension 
since $\mathbb{H}^n$ is Lipschitz $(n-1)$-connected
by the result of Wenger and Young, 
and this extension belongs to $W^{1,p}(\Omega,\mathbb{H}^n)$ 
for $1 \leq p \leq \infty$.
However, if $m > n$ we have the following possibility:
\begin{proposition}
\label{sharp}
There is a Lipschitz mapping $f:S^n \to \mathbb{H}^n$ 
such that there is no mapping $F:B^{n+1} \to \mathbb{H}^n$
satisfying both
$F \in W^{1,n+1}(B^{n+1},\mathbb{H}^n)$ and
$F|_{S^n}=f$.
\end{proposition}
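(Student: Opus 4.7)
My plan is to use the same bi-Lipschitz embedding $f\colon S^n\to\mathbb{H}^n$ from \cite{rekt} (equivalently from \cite{haj_spheres}) that obstructs the Lipschitz extension property for $m=n+1$, and to upgrade the Lipschitz obstruction to a Sobolev one by extending its key ingredient---a rank bound on the horizontal differential---to $W^{1,n+1}$ mappings.

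Suppose for contradiction that $F=(F_1,\ldots,F_{2n+1})\in W^{1,n+1}(B^{n+1},\mathbb{H}^n)$ extends $f$, and write $G=(F_1,\ldots,F_{2n})\colon B^{n+1}\to\mathbb{R}^{2n}$. By the definition of $W^{1,n+1}(B^{n+1},\mathbb{H}^n)$ in Section~\ref{sec_sob}, the mapping $F$ obeys the weak contact condition
\[
\nabla F_{2n+1}=2\sum_{i=1}^{n}\bigl(F_{2i-1}\nabla F_{2i}-F_{2i}\nabla F_{2i-1}\bigr)\qquad\text{a.e.\ on }B^{n+1}.
\]
Taking the distributional exterior derivative of this identity yields $G^{\ast}\omega=0$, where $\omega=\sum_{i=1}^{n}dx_{2i-1}\wedge dx_{2i}$ is the standard symplectic form on $\mathbb{R}^{2n}$. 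Because $p=n+1\ge 2$, this pullback is a genuine $L^{(n+1)/2}$ two-form on $B^{n+1}$, and its vanishing forces the image of the approximate differential of $G$ to be an isotropic subspace of $(\mathbb{R}^{2n},\omega)$ at almost every point. Since isotropic subspaces have dimension at most $n$, one obtains $\operatorname{rank} dG\le n$ almost everywhere on $B^{n+1}$.

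To close the argument, following \cite{haj_spheres}, I would choose a coordinate projection $\pi'\colon\mathbb{R}^{2n}\to\mathbb{R}^{n+1}$ so that the map $\pi'\circ\pi\circ f\colon S^n\to\mathbb{R}^{n+1}$ has nonzero topological degree onto its (spherical) image, where $\pi\colon\mathbb{H}^n\to\mathbb{R}^{2n}$ denotes the horizontal projection. Set $\tilde G=\pi'\circ G\in W^{1,n+1}(B^{n+1},\mathbb{R}^{n+1})$; the rank bound gives $J\tilde G=0$ a.e. The Sobolev change-of-variables identity
\[
\int_{B^{n+1}}(\varphi\circ\tilde G)\,J\tilde G\,dx=\int_{\mathbb{R}^{n+1}}\varphi(y)\,\deg(\tilde G,B^{n+1},y)\,dy,
\]
valid for $\varphi\in C_c(\mathbb{R}^{n+1}\setminus\tilde G(S^n))$ and bounded $\tilde G\in W^{1,n+1}$, then forces $\deg(\tilde G,B^{n+1},\cdot)=0$ almost everywhere off $\tilde G(S^n)$. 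On the other hand, the Brezis--Nirenberg VMO boundary formula, applied to the trace $\tilde G|_{S^n}=\pi'\circ\pi\circ f$, produces an open set of positive measure on which the degree is nonzero, a contradiction.

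The main technical obstacle is the derivation of the pointwise rank bound $\operatorname{rank} dG\le n$ at the critical Sobolev exponent $p=n+1$: one must interpret $G^{\ast}\omega=0$ pointwise via approximate (Pansu) derivatives. This rests on the Pansu--Rademacher differentiability theory for Sobolev mappings into Carnot groups developed by Haj\l asz and coauthors. Once the rank bound is secured, the remainder of the argument is a standard application of Sobolev degree theory.
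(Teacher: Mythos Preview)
Your overall strategy---derive the rank bound $\operatorname{rank} DF\le n$ from the weak contact equation, then contradict it with a degree-type topological obstruction on the boundary data---is exactly the paper's. The rank bound you sketch via $G^{\ast}\omega=0$ and isotropy is precisely the content of \cite[Theorem~1.4]{haj_weak}, which the paper simply cites.

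Where you and the paper diverge is in the execution of the degree step. The paper composes $F$ not with a linear projection but with a $C^1$ extension $\Psi:\mathbb{R}^{2n+1}\to\mathbb{R}^{n+1}$ of $f^{-1}$, so that $\Psi\circ F|_{S^n}=\mathrm{id}_{S^n}$ and the degree-one claim is automatic; it then approximates $F$ in $W^{1,n+1}$ by $C^1$ maps $F_k$ with $F_k=f$ on $S^n$ and $|\{F_k\neq F\}|\to 0$, applies Brouwer to $\Psi\circ F_k$ to get $\int_{B^{n+1}}|JF_k|\ge c>0$, and contradicts this using $JF_k=0$ a.e.\ on $\{F_k=F\}$ together with $L^1$-convergence of the Jacobians. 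Your route via the Sobolev change-of-variables formula and Brezis--Nirenberg VMO degree is a legitimate alternative, but the assertion that one can ``choose a coordinate projection $\pi'$'' so that $\pi'\circ\pi\circ f:S^n\to\mathbb{R}^{n+1}$ has nonzero degree is not justified: for an arbitrary bi-Lipschitz horizontal embedding $f$ there is no reason a linear, let alone coordinate, projection of the Lagrangian sphere $\pi(f(S^n))\subset\mathbb{R}^{2n}$ to $\mathbb{R}^{n+1}$ should have nonzero degree; this must be checked for the specific embedding at hand. The paper's device of using an extension of $f^{-1}$ sidesteps this issue entirely and works for any smooth bi-Lipschitz $f$. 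If you replace $\pi'\circ\pi$ by such a $\Psi$, your VMO-degree argument becomes a clean variant of the paper's approximation-and-Brouwer step.
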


Here, the restriction $F|_{S^n}=f$ is understood in the sense of traces
in $W^{1,n+1}(B^{n+1},\mathbb{R}^{2n+1})$.
That is, if $\{F_k\}$ is a sequence of mappings 
$F_k:B^{n+1} \to \mathbb{R}^{2n+1}$
which are $C^1$ up to the boundary 
and such that $F_k \to F$ in the norm of $W^{1,n+1}(B^{n+1},\mathbb{R}^{2n+1})$,
then $F_k|_{S^n}$ (classical function restriction) converges to $f$ in $L^p(S^n)$.

One such mapping $f:S^n \to \mathbb{H}^n$ is the bi-Lipschitz embedding used by Balogh and F\"{a}ssler \cite{rekt}.
In the proof of Proposition~\ref{sharp}, we will see ideas from
\cite[Theorem 2]{haj_geo},
\cite[Theorem 2.3]{haj_sob},
and \cite[Theorem 1.5]{haj_spheres}.

For mappings with Euclidean target,
Sobolev extension results like Theorem \ref{main} provide extensions defined on all of $\mathbb{R}^m$
via multiplication by a cutoff function.
However, since multiplication by a cutoff function in $\mathbb{R}^{2n+1} = \mathbb{H}^n$
does not necessarily preserve the weak contact equation \eqref{weak_contact}, 
such a simple argument will not work here.
We instead have the following local Sobolev extension defined on all of $\mathbb{R}^m$.
\begin{corollary}
Fix $m,n \in \mathbb{N}$.
Suppose $Z \subset \mathbb{R}^m$ is compact.
For $1 \leq p < n+1$ and any $L$-Lipschitz mapping $f:Z \to \mathbb{H}^n$, $L \geq 0$,
there exists a mapping
$\tilde{F}:\mathbb{R}^m \to \mathbb{H}^n$ in the class 
$W^{1,p}_{loc}(\mathbb{R}^m,\mathbb{H}^n)$ 
such that $F(x) = f(x)$ for all $x \in Z$.
\end{corollary}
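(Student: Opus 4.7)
The plan is to reduce the statement to Theorem~\ref{main} by enlarging $Z$ so that the resulting Sobolev extension is already identically constant on a thick spherical shell, after which the extension continues trivially to all of $\mathbb{R}^m$ by that same constant. This is motivated by the remark preceding the Corollary: one cannot simply multiply by a cutoff without destroying the contact equation \eqref{weak_contact}, so I would instead build the cutoff into the Lipschitz data before applying the main theorem.

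Fix $z_0 \in Z$ and set $p_0 = f(z_0) \in \mathbb{H}^n$. Choose radii $R < R' < R''$ with $Z \subset B(0, R)$ and let $S = \{ x \in \mathbb{R}^m : R' \leq |x| \leq R'' \}$. The compact set $Z' = Z \cup S$ carries a Lipschitz map $f' : Z' \to \mathbb{H}^n$ defined by $f'|_Z = f$ and $f'|_S \equiv p_0$; checking the three cases in which pairs of points can lie gives the Lipschitz constant $L' = \max(L, L\,\diam(Z)/(R' - R))$, using $d_{\mathbb{H}^n}(f(z), p_0) \leq L\,\diam(Z)$ and $|z - s| \geq R' - R$ for the mixed pairs. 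Apply Theorem~\ref{main} to $Z' \subset \Omega$ for any bounded domain $\Omega$ containing $\overline{B(0, R'')}$, obtaining $F' \in W^{1,p}(\Omega, \mathbb{H}^n)$ with $F'(x) = f'(x)$ for every $x \in Z'$. In particular $F' \equiv p_0$ pointwise on the shell $S$.

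Finally, define
\[
\tilde{F}(x) = \begin{cases} F'(x) & |x| \leq R'', \\ p_0 & |x| > R''. \end{cases}
\]
Because $F' \equiv p_0$ on the open annulus $\{R' < |x| < R''\}$, the map $\tilde{F}$ is constantly $p_0$ on the open set $\{|x| > R'\}$, so the junction at $\partial B(0, R'')$ sits inside a region where $\tilde{F}$ is locally constant. Consequently, on any bounded open $U \subset \mathbb{R}^m$ the two open sets $U \cap B(0, R'')$ and $U \cap \{|x| > R'\}$ cover $U$, on each $\tilde{F}$ is Sobolev into $\mathbb{H}^n$, and \eqref{weak_contact} is inherited from $F'$ on one piece and is trivial on the other; hence $\tilde{F} \in W^{1,p}_{\loc}(\mathbb{R}^m, \mathbb{H}^n)$. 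The main obstacle to be careful about is exactly this gluing: if one tried to prescribe $f'$ only on a sphere (a set of measure zero), Theorem~\ref{main} would give no information about $F'$ in any open neighborhood of that sphere, and the constant extension outside would not glue into a Sobolev map into $\mathbb{H}^n$. Using a \emph{thick} shell is what converts the pointwise prescription provided by Theorem~\ref{main} into the open-set identity $F' \equiv p_0$ needed for the contact-preserving gluing.
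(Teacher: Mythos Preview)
Your argument is correct, and it takes a genuinely different route from the paper. The paper's proof is a one-liner: choose a cube $\Omega \supset Z$, pick a diffeomorphism $\Phi:\mathbb{R}^m \to \Omega$ that fixes $Z$, and set $\tilde{F} = F \circ \Phi$ where $F$ is the extension from Theorem~\ref{main}; the chain rule gives $W^{1,p}_{\loc}$ and the weak contact equation is preserved because the image of $D(F\circ\Phi)(x)$ lies in the image of $DF(\Phi(x))$. Your approach instead enlarges the Lipschitz data by a thick shell on which $f'$ is constant, applies Theorem~\ref{main} once, and then continues by that constant. The paper's method is shorter and avoids re-checking a Lipschitz constant, but it silently relies on the behavior of the contact condition under precomposition with a diffeomorphism whose derivative blows up at the boundary of $\Omega$; your method makes the gluing completely explicit and yields the additional feature that $\tilde{F}$ is eventually constant, at the cost of a small amount of bookkeeping. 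Your emphasis on using a \emph{thick} shell (rather than a sphere) is exactly the right point: it is what turns the pointwise conclusion of Theorem~\ref{main} into an identity on an open set, which is what the Sobolev gluing needs.
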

This follows easily from the theorem.
Indeed, suppose $\Omega$ is a cube containing $Z$
and $\Phi:\mathbb{R}^m \to \Omega$ is a diffeomorphism which fixes $Z$.
Then, if $F \in W^{1,p}(\Omega,\mathbb{H}^n)$ is the extension from Theorem~\ref{main},
it follows that $\tilde{F} := F \circ \Phi \in W^{1,p}_{loc}(\mathbb{R}^m,\mathbb{H}^n)$.

It follows from classical Lipschitz extension proofs that
there is a constant $C>0$ so that any $L$-Lipschitz mapping $f:A \to Y$ 
defined on a closed subset $A \subset \mathbb{R}^m$ 
has a $CL$-Lipschitz extension $F:X \to Y$
when $Y$ is any Lipschitz $(n-1)$-connected metric space and $m \leq n$
(see \cite{almgren,extend_banach} or the proof of Lemma~\ref{wengeryoung}).
It turns out that Theorem~\ref{main} can be generalized to the case when the target space $\mathbb{H}^n$
is replaced by an arbitrary Lipschitz $(n-1)$-connected metric space $Y$.
In this case, our extension will be in the \emph{Ambrosio-Reshetnyak-Sobolev class} $AR^{1,p}(\Omega,Y)$.
For a bounded domain $\Omega$ in $\mathbb{R}^m$ and $1 \leq p < \infty$,
a mapping $F: \Omega \to Y$ belongs to the class $AR^{1,p}(\Omega,Y)$
if there is a non-negative function $g \in L^p(\Omega)$ satisfying the following:
for any $K$-Lipschitz $\phi:Y \to \mathbb{R}$, 
we have $\phi \circ F \in W^{1,p}(\Omega)$ 
and $|\partial (\phi \circ F) / \partial x_k(x)| \leq K g(x)$ 
for $k=1,\dots,m$ and almost every $x \in \Omega$.
This class of mappings was first introduced in \cite{ambrosio} and \cite{reshet1}.

\begin{theorem}
\label{main2}
Fix $m,n \in \mathbb{N}$.
Suppose $Z \subset \mathbb{R}^m$ is compact, 
$\Omega$ is a bounded domain in $\mathbb{R}^m$ 
with $Z \subset \Omega$,
and $Y$ is a Lipschitz $(n-1)$-connected metric space with constant $\gamma$.
For $1 \leq p < n+1$ and any $L$-Lipschitz mapping $f:Z \to Y$, $L \geq 0$,
there exists a mapping
$F:\Omega \to Y$ in the class 
$AR^{1,p}(\Omega,Y)$ 
such that $F(x) = f(x)$ for all $x \in Z$.
In particular, $\phi \circ F \in ACL^p(\Omega)$
for any Lipschitz $\phi: Y \to \mathbb{R}$.

\noindent Moreover, there is a constant $C>0$ depending only on $m$, $n$, $p$, and $\gamma$
such that we may choose $g \in L^p(\Omega)$ in the definition of $AR^{1,p}(\Omega,Y)$ 
with $\Vert g\Vert _{L^p(\Omega)} \leq CL \left( \diam(\Omega) \right)^{m/p}$.
\end{theorem}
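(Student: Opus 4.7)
The plan is to construct $F$ by a Whitney-style extension that exploits the Lipschitz $(n-1)$-connectivity of $Y$ on the $n$-skeleton of a grid and uses iterated radial coning to reach full dimension. I would begin by taking a standard Whitney decomposition $\{Q_i\}$ of $\Omega \setminus Z$ into dyadic cubes satisfying $\diam(Q_i) \approx \dist(Q_i, Z)$, with adjacent cubes of comparable size. For each vertex $v$ of the decomposition, I would fix a nearest point $z(v) \in Z$ and set $F(v) := f(z(v))$. Standard Whitney estimates yield $|z(v) - z(w)| \lesssim \diam(Q)$ whenever $v, w$ are vertices of a common cube $Q$ (or of two adjacent cubes), so
\[
d_Y(F(v), F(w)) \leq L|z(v) - z(w)| \lesssim L \diam(Q).
\]

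Next I would extend $F$ inductively over the $k$-skeleton of the grid for $k = 1, \ldots, \min(m,n)$. Assuming $F$ is already defined on the $(k-1)$-skeleton and is Lipschitz on the boundary of each $k$-face $\tau$ (a topological $(k-1)$-sphere) with constant $\lesssim L$ after rescaling $\tau$ to unit size, I would apply Lipschitz $(n-1)$-connectivity, which is valid precisely because $k \leq n$, to extend to a map on $\tau$ with Lipschitz constant $\lesssim \gamma L$. Unscaling preserves the Lipschitz constant since both sides rescale identically. If $m \leq n$ this already defines a globally Lipschitz extension to $\Omega$, and the theorem follows at once with $g$ equal to the resulting global Lipschitz constant.

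If $m > n$, I would continue extending skeleton-by-skeleton for $k = n+1, \ldots, m$ by radial coning: for each $k$-face $\tau$ with boundary already defined, let $c_\tau$ be the center of $\tau$ and set $F(x) := F(\pi_\tau(x))$ for $x \in \tau$, where $\pi_\tau$ denotes the radial projection from $c_\tau$ onto $\partial \tau$. For any $K$-Lipschitz $\phi \colon Y \to \mathbb{R}$, the composition $\phi \circ F$ is locally Lipschitz on $\Omega$ away from the (measure-zero) union of iterated cone centers, and iterating the chain rule through the successive radial projections produces a pointwise bound of the form
\[
|\nabla(\phi \circ F)(x)| \lesssim KL \prod_{j=n+1}^{k} \frac{\diam(\tau_j)}{|y_j(x) - c_{\tau_j}|},
\]
where $\tau_j$ denotes the unique $j$-face containing $x$ and $y_j(x) \in \tau_j$ is the $j$-th iterated radial projection of $x$. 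Passing to polar coordinates centered at each $c_{\tau_j}$ shows that the singularity $|y_j - c_{\tau_j}|^{-p}$ in dimension $j$ is integrable precisely when $p < j$, so the single hypothesis $p < n+1$ controls every stage simultaneously.

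The main obstacle will be assembling these pointwise estimates into a single function $g \in L^p(\Omega)$ with the stated norm bound, thus verifying the $AR^{1,p}$ definition uniformly over all Lipschitz $\phi$. Taking $g$ to equal the right-hand side of the displayed bound above (divided by $K$), iterated polar integration on each Whitney cube gives $\int_{Q_i} g^p \lesssim (CL)^p \diam(Q_i)^m$, and summing via $\sum_i \diam(Q_i)^m \lesssim |\Omega| \leq (\diam \Omega)^m$ produces $\|g\|_{L^p(\Omega)} \lesssim CL (\diam \Omega)^{m/p}$. Continuity of $F$ across faces shared by adjacent Whitney cubes will follow from the careful rule of defining the extension on lower-dimensional skeleta before using them as boundary data for higher-dimensional cells, and the trace identity $F|_Z = f$ will follow from the Whitney property that $F$-values on cubes near $Z$ lie within $CL \diam(Q_i)$ of $f$-values at adjacent points of $Z$, a quantity which tends to zero as $x \to Z$.
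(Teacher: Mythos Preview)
Your strategy matches the paper's: Whitney decomposition, Lipschitz extension to the $n$-skeleton via $(n-1)$-connectivity, then iterated radial coning to full dimension, with integrability of the resulting singularities governed by $p<n+1$. However, there is a technical gap you should not gloss over. A Whitney decomposition into dyadic cubes does \emph{not} form a CW complex: adjacent cubes may differ in side length by a factor of $2$, so an $(m-1)$-face of one cube can be the union of several $(m-1)$-faces of smaller neighbors, and there is no well-defined ``$k$-skeleton of the grid'' on which to run your induction. Both your Lipschitz-extension step and your coning step presuppose that each $k$-cell has a boundary lying in the $(k-1)$-skeleton and that the extension on a shared face is defined consistently from both sides; with raw Whitney cubes this is not automatic. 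The paper fixes this by passing to the \emph{Whitney triangulation}: each Whitney cube is subdivided into simplices (inductively, coning over the simplicial structure already placed on its boundary faces) so that the result is an honest simplicial complex with only finitely many similarity types of simplex. Once you make that adjustment, the rest of your outline goes through essentially as written, and indeed coincides with the paper's argument.

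One further detail: establishing $F\in AR^{1,p}$ requires showing $\phi\circ F\in ACL^p(\Omega)$, in particular absolute continuity along lines that \emph{meet} $Z$, not merely the continuous matching $F(x)\to f(z)$ as $x\to z\in Z$ that you describe. The paper handles this by subtracting from $\phi\circ F$ a one-dimensional Lipschitz extension of $(\phi\circ f)|_{\ell\cap Z}$ along the line $\ell$ and checking that the difference is the indefinite integral of an $L^1$ function vanishing on $\ell\cap Z$.
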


Notice that, as before, there is no restriction on the dimension of the domain.
The theory of Sobolev mappings into metric spaces has been studied extensively in \cite{ambrosio,harmonic,peano,quasicon,shanmugalingam,sobolev_book,kor_sch,reshet1,reshet2}.
In particular, $\mathbb{H}^n$ valued Sobolev mappings have been explored in \cite{haj_weak,cap_lin,haj_lack,peano,mag_mal_mon}.
One motivation for the study of Sobolev extensions 
stems from the problem of approximating Sobolev mappings by Lipschitz ones \cite{bethuel,bos_pon_sch,haj_lack,haj_density,haj_spheres,hang_lin}.
In fact, the proof of Theorem~\ref{main2} employs the so called zero degree homogenization discussed in \cite{bos_pon_sch,haj_density}.

As we will see in Proposition \ref{heisenberg_sobolev},
$W^{1,p}(\Omega,\mathbb{H}^n)$ is contained in $AR^{1,p}(\Omega,\mathbb{H}^n)$.
Furthermore, in the case of bounded mappings, the two definitions of the Sobolev class are equivalent.
Hence Theorem~\ref{main} will be proven as a corollary to Theorem~\ref{main2}.

The format of the paper is as follows.
In Section \ref{sec_heis}, the Heisenberg group $\mathbb{H}^n$ is defined and relevant geometric properties are introduced.
The topic of Sobolev mappings into metric spaces, and in particular into $\mathbb{H}^n$, is addressed in Section~\ref{sec_sob}, 
and the section ends with the proof of Proposition~\ref{sharp}.
Section \ref{AN} introduces the Whitney triangulation of an open set in $\mathbb{R}^m$ and establishes a Lipschitz extension lemma. 
These are the primary tools used in the proof of Theorem~\ref{main2} given in Section~\ref{sec_main}.
The short proof of Theorem~\ref{main} then follows.

The author would like to extend thanks to his advisor Piotr Haj\l{}asz 
for his assistance in discovering this problem and many helpful conversations about the solution
and to the referees for their very helpful recommendations
which led to an improvement of the paper.

\section{The Heisenberg Group}
\label{sec_heis}

The Heisenberg group $\mathbb{H}^n$ is $\mathbb{R}^{2n+1}$ given the structure of a Lie group with multiplication
\begin{align*}
(x_1,y_1, \dots, x_n,y_n,t)&*(x_1',y_1', \dots, x_n',y_n',t') \\
& = 
\Big(x_1+x_1', y_1+y_1', \dots , x_n + x_n' , y_n+y_n', t+t'+2 \sum_{j=1}^n (x_j' y_j - x_j y_j')\Big)
\end{align*}
with Lie algebra $\mathfrak{g}$ whose basis of left invariant vector fields is
$$
X_j(p) = \frac{\partial}{\partial x_j} + 2y_j \frac{\partial}{\partial t}, 
\quad 
Y_j(p)= \frac{\partial}{\partial y_j} - 2x_j \frac{\partial}{\partial t}, 
\quad 
T=\frac{\partial}{\partial t},
\quad
j=1,2,\ldots,n
$$
at any $p=(x_1, y_1, \dots, x_n , y_n,t) \in \mathbb{H}^n$.
We call $H \mathbb{H}^n = \mathrm{span} \{ X_1,Y_1,\dots, X_n, Y_n \}$ the {\em horizontal distribution} on $\mathbb{H}^n$, 
and denote by $H_p \mathbb{H}^n$ the horizontal space at $p$.
It is easy to see that the horizontal distribution is the kernel of the {\em standard contact form}
\begin{equation}
\label{contact}
\alpha = dt+2 \sum_{j=1}^n(x_j dy_j - y_j dx_j).
\end{equation}
That is, $H_p \mathbb{H}^n = \text{ker} \, \alpha (p)$. 
We say that an absolutely continuous curve
$\gamma:[a,b] \to \mathbb{R}^{2n+1}$ is \emph{horizontal} if $\gamma'(t) \in H_{\gamma(t)} \mathbb{H}^n$
for almost every $t \in [a,b]$.

Equip the horizontal distribution $H \mathbb{H}^n$ with the left invariant metric 
which makes all of the vectors $X_j$ and $Y_j$ orthonormal at every point in $\mathbb{H}^n$.
Under this metric, if we write
$$
\gamma'(t) = \sum_{j=1}^n \alpha_j(t) X_j(\gamma(t)) + \beta_j(t) Y_j(\gamma(t)) \quad \text{ for a.e. } t \in [a,b],
$$
for any horizontal curve $\gamma: [a,b] \to \mathbb{H}^n$,
then the length of $\gamma$ is
$$
\ell_H(\gamma) := \int_a^b \Vert \gamma'(t)\Vert _H \, dt = \int_a^b \sqrt{\sum_{j=1}^n \alpha_j^2(t) + \beta_j^2(t)}\, dt.
$$
If we write $\pi:\mathbb{R}^{2n+1} \to \mathbb{R}^{2n}$ for the projection onto the first $2n$ coordinates,
notice that $\ell_H(\gamma)$ is equal to the Euclidean length $\ell_E(\pi \circ \gamma)$.
Therefore,
\begin{equation}
\label{lengthbound}
\ell_H(\gamma) \leq \ell_E(\gamma).
\end{equation}

We equip $\mathbb{H}^n$ with the Carnot-Carath\'{e}odory metric $d$ 
defined so that $d(p,q)$ equals the infimum of lengths $\ell_H(\gamma)$ over all horizontal curves $\gamma$ connecting $p$ and $q$.
Any two points in $\mathbb{H}^n$ may be connected by a horizontal curve of finite length, 
so $d$ is indeed a metric.
Topologically, $(\mathbb{H}^n,d)$ is homeomorphic to $\mathbb{R}^{2n+1}$.
Moreover, for any compact $K \subset \mathbb{H}^n$, there is a constant $C \geq 1$ so that
\begin{equation}
\label{equivalent}
C^{-1}|p-q| \leq d(p,q) \leq C|p-q|^{1/2}
\end{equation}
for every $p,q \in K$.
In particular, for any $E \subset \mathbb{R}^m$, 
every locally Lipschitz mapping $F:E \to \mathbb{H}^n$ is also locally Lipschitz as a mapping into $\mathbb{R}^{2n+1}$.
Moreover, one may show that $F$ is bounded as a mapping into $\mathbb{R}^{2n+1}$
if and only if
it is bounded as a mapping into $\mathbb{H}^n$.

It will occasionally be helpful for us to consider a different but bi-Lipschitz equivalent metric $d_K$ on $\mathbb{H}^n$
called the \emph{Kor\'{a}nyi metric} defined for any $p,q \in \mathbb{H}^n$ as
$$
d_K(p,q) = \Vert q^{-1} * p\Vert _K \quad \text{ where } \quad \Vert (x,y,t)\Vert _K = \left(|(x,y)|^4 + t^2 \right)^{1/4}.
$$
If we write $p = (x,y,t)$ and $q = (x',y',t')$, then
\begin{align*}
d_K(p,q) 
&= \left( \left[ \sum_{j=1}^n (x_j-x_j')^2 + (y_j-y_j')^2 \right]^2 + \left[ t-t' + 2 \sum_{j=1}^n (x_j'y_j - x_j y_j') \right]^2 \right)^{1/4} \\
&\approx \left[ \sum_{j=1}^n (x_j-x_j')^2 + (y_j-y_j')^2 \right]^{1/2} + \left| t-t' + 2 \sum_{j=1}^n (x_j'y_j - x_j y_j') \right|^{1/2}
\end{align*}
where $f \approx g$ means $C^{-1}f \leq g \leq Cf$ for some constant $C \geq 1$.
In particular, the above relationship combined with the bi-Lipschitz equivalence of $d$ and $d_K$ gives
\begin{equation}
\label{koranyi}
\left| t-t' + 2 \sum_{j=1}^n (x_j'y_j - x_j y_j') \right|^{1/2} \leq C d(p,q)
\end{equation}
for some constant $C \geq 1$.
For more details about the Heisenberg group and proofs of the above claims, see \cite{heisenberg_book}.

Finally, we will use the following result in the proof of Proposition~\ref{sharp}.
This is a result from \cite{rekt}, and another construction is given in \cite[Theorem 3.2]{haj_lack}.
\begin{theorem}
\label{embedding}
For any $n \geq 1$,
there is a smooth embedding of the sphere $S^n$ into $\mathbb{R}^{2n+1}$
which is horizontal and bi-Lipschitz as a mapping into $\mathbb{H}^n$
and has no Lipschitz extension $F:B^{n+1} \to \mathbb{H}^n$.
\end{theorem}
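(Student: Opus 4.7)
The proof splits naturally into constructing the horizontal bi-Lipschitz embedding and then ruling out Lipschitz extensions. For the construction, I would start with a smooth isotropic immersion $\varphi\colon S^n\to\mathbb{R}^{2n}$, which exists by Gromov's $h$-principle. The isotropy condition $\varphi^{*}\bigl(\sum_j dx_j\wedge dy_j\bigr)=0$ ensures that the pullback $\varphi^{*}\bigl(-2\sum_j(x_j\,dy_j-y_j\,dx_j)\bigr)$ is closed, and a suitable choice of $\varphi$ (simple connectivity of $S^n$ for $n\geq 2$; an area-zero loop such as a smooth figure-eight for $n=1$) makes this form exact, admitting a primitive $\tau\colon S^n\to\mathbb{R}$. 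Then $\Phi=(\varphi,\tau)$ is a smooth horizontal immersion into $\mathbb{H}^n$, and after a generic perturbation of $\varphi$ the values of $\tau$ at any pair of preimages of a self-intersection of $\varphi$ differ, so $\Phi$ is an embedding. Since $\Phi(S^n)$ is then a compact smooth submanifold of $\mathbb{R}^{2n+1}$, $\Phi$ is bi-Lipschitz into $(\mathbb{R}^{2n+1},|\cdot|)$. Bi-Lipschitzness into $(\mathbb{H}^n,d)$ follows from \eqref{equivalent} for the lower bound $d(\Phi(p),\Phi(q))\geq C^{-1}|\Phi(p)-\Phi(q)|\gtrsim|p-q|$, and from $d(\Phi(p),\Phi(q))\leq\ell_H(\Phi\circ\gamma)=\ell_E(\pi\circ\Phi\circ\gamma)\leq\ell_E(\Phi\circ\gamma)\lesssim|p-q|$ applied to a shortest $S^n$-geodesic $\gamma$ between $p$ and $q$.

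To rule out a Lipschitz extension, I would argue by contradiction. Suppose $F\colon B^{n+1}\to\mathbb{H}^n$ is Lipschitz with $F|_{S^n}=\Phi$. By Pansu's differentiation theorem, $F$ is Pansu-differentiable almost everywhere, and the Pansu differential $D_HF(x)\colon\mathbb{R}^{n+1}\to H_{F(x)}\mathbb{H}^n$ is a Lie group homomorphism. Its image is therefore abelian and is contained in an isotropic subspace of the horizontal distribution with respect to the symplectic form $d\alpha|_{H\mathbb{H}^n}$; such isotropic subspaces have dimension at most $n$. Hence $\operatorname{rank}D_HF(x)\leq n$ almost everywhere, and Magnani's area formula for Lipschitz mappings into Carnot groups (or a direct covering argument built from Pansu's approximation) gives $\mathcal{H}^{n+1}(F(B^{n+1}))=0$ with respect to the Carnot-Carath\'eodory metric. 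Combined with the left inequality in \eqref{equivalent}, this also forces $\mathcal{H}^{n+1}_{\mathrm{Eucl}}(F(B^{n+1}))=0$.

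The main obstacle is to convert this measure vanishing into a topological contradiction. My plan is a generic projection argument together with Brouwer degree theory. Arranging $\Phi$ so that for some linear projection $P\colon\mathbb{R}^{2n+1}\to\mathbb{R}^{n+1}$ the composition $P\circ\Phi$ is a smooth embedding of $S^n$ into $\mathbb{R}^{n+1}$ (achievable by building the symmetric horizontal sphere of \cite{rekt} and projecting onto an appropriate ``equatorial'' $\mathbb{R}^{n+1}$), the Jordan-Brouwer theorem produces a compact region $\overline{D}\subset\mathbb{R}^{n+1}$ with $\mathcal{H}^{n+1}(D)>0$ whose boundary is $P\circ\Phi(S^n)$. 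A standard Brouwer degree computation applied to the continuous extension $P\circ F\colon B^{n+1}\to\mathbb{R}^{n+1}$ then forces $D\subseteq P\circ F(B^{n+1})$, and since $P$ is $1$-Lipschitz,
\[
0<\mathcal{H}^{n+1}(D)\leq \mathcal{H}^{n+1}\bigl(P(F(B^{n+1}))\bigr)\leq \mathcal{H}^{n+1}(F(B^{n+1}))=0,
\]
a contradiction. The principal technical difficulty lies precisely in producing $\Phi$ compatible with such a projection, and this is the crux of both constructions in \cite{rekt} and \cite{haj_lack}.
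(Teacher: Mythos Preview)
The paper does not prove Theorem~\ref{embedding}; it is quoted from \cite{rekt} (with an alternative construction cited from \cite[Theorem~3.2]{haj_lack}), so there is no in-paper argument to compare against. Your construction sketch is in the spirit of those references: lift an isotropic (Legendrian) immersion $\varphi:S^n\to\mathbb{R}^{2n}$ to a horizontal map $\Phi=(\varphi,\tau)$ and separate self-intersections by height. The bi-Lipschitz verification you give is correct.

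Where your outline is heavier than necessary is the non-extension half. You aim to produce a \emph{linear} projection $P:\mathbb{R}^{2n+1}\to\mathbb{R}^{n+1}$ with $P\circ\Phi$ an embedding, and you correctly flag this as the main technical difficulty. The paper avoids this entirely (see the proof of Proposition~\ref{sharp}): since $\Phi$ is a smooth embedding, $\Phi^{-1}:\Phi(S^n)\to S^n$ is $C^1$ and extends to a $C^1$ map $\Psi:\mathbb{R}^{2n+1}\to\mathbb{R}^{n+1}$ with bounded derivative. If $F:B^{n+1}\to\mathbb{H}^n$ were Lipschitz with $F|_{S^n}=\Phi$, then $F$ is Lipschitz into $\mathbb{R}^{2n+1}$ by \eqref{equivalent}, satisfies the contact equation, and hence has $\mathrm{rank}\,DF\leq n$ a.e.\ (as you argue via Pansu, or directly from \cite[Theorem~1.4]{haj_weak}); thus $\mathcal{H}^{n+1}_{\mathrm{Eucl}}(F(B^{n+1}))=0$, so $\mathcal{H}^{n+1}(\Psi\circ F(B^{n+1}))=0$. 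But $\Psi\circ F$ is continuous and equals the identity on $S^n$, so Brouwer gives $B^{n+1}\subset\Psi\circ F(B^{n+1})$, a contradiction. This uses only that $\Phi$ is a smooth embedding and removes any need to tailor $\Phi$ to a linear projection; your projection step is a genuine extra obstacle that the standard argument simply sidesteps.
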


\section{Sobolev mappings into metric spaces}
\label{sec_sob}

For a domain $\Omega \subset \mathbb{R}^m$,
the Sobolev space $W^{1,p}(\Omega)$, $1 \leq p < \infty$, consists of those real valued functions in $L^p(\Omega)$ 
whose distributional partial derivatives are also functions in $L^p(\Omega)$.
The Sobolev space $W^{1,p}(\Omega,\mathbb{R}^{k})$ consists of 
mappings from $\Omega$ into $\mathbb{R}^{k}$ whose components are members of $W^{1,p}(\Omega)$.
For any $f \in W^{1,p}(\Omega)$, we write $\nabla f$ 
to denote the vector consisting of the $m$ weak partial derivatives of $f$.

The following classical characterization of Sobolev functions 
will be used several times throughout the paper.
Suppose $\Omega$ is a domain in $\mathbb{R}^m$.
Call $ACL(\Omega)$ the space of all measurable real valued functions $u$ on $\Omega$
so that, for $(m-1)$-almost every line $\bar{\ell}$ parallel to a coordinate axis,
the restriction of $u$ to $\ell = \bar{\ell} \cap \Omega$ is locally absolutely continuous.
In particular, the partial derivatives of $u$ exist almost everywhere in $\Omega$ in the classical sense.
Say $u \in ACL^p(\Omega)$ if $u \in ACL(\Omega)$ and $u,|\nabla u| \in L^p(\Omega)$.
Say $u \in ACL^p(\Omega,\mathbb{R}^n)$ if each of the component functions of $u$ is in $ACL^p(\Omega)$.

\begin{lemma}
\label{aclp}
Suppose $1 \leq p < \infty$.
Then $W^{1,p}(\Omega) = ACL^p(\Omega)$.
\end{lemma}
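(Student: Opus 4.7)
The plan is to establish both inclusions separately via the standard mollification/Fubini strategy, since this equivalence is classical and does not rely on the Heisenberg structure of the paper.

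For the inclusion $W^{1,p}(\Omega) \subseteq ACL^p(\Omega)$, I would start with $u \in W^{1,p}(\Omega)$ and take standard mollifications $u_\epsilon = u \ast \eta_\epsilon \in C^\infty$ on any compactly contained subdomain, with $u_\epsilon \to u$ and $\nabla u_\epsilon \to \nabla u$ in $L^p_{\loc}(\Omega)$. After passing to a subsequence, pointwise a.e. convergence holds as well. Fix a coordinate direction, say $x_1$, and apply Fubini to the functions $|u_\epsilon - u|^p$ and $|\nabla u_\epsilon - \nabla u|^p$: for almost every line $\ell$ parallel to the $x_1$-axis, the restrictions converge in $L^p(\ell)$. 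On any such good line, $u_\epsilon|_\ell$ is smooth with $\partial_{x_1} u_\epsilon|_\ell \to \partial_{x_1} u|_\ell$ in $L^p$, so one may define a representative of $u|_\ell$ as the indefinite integral of $\partial_{x_1} u|_\ell$, which is absolutely continuous. Doing this for each of the $m$ coordinate directions and exhausting $\Omega$ by compactly contained subdomains yields a representative of $u$ in $ACL(\Omega)$, and since the classical partial derivatives then coincide a.e.\ with the weak ones, integrability in $L^p(\Omega)$ is immediate.

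For the reverse inclusion $ACL^p(\Omega) \subseteq W^{1,p}(\Omega)$, I would take $u \in ACL^p(\Omega)$ and show that the classical partial derivatives, which exist a.e.\ by definition and are assumed to lie in $L^p(\Omega)$, serve as the distributional derivatives. Given $\varphi \in C_c^\infty(\Omega)$ and a coordinate index $k$, I would compute $\int_\Omega u\, \partial_{x_k}\varphi\, dx$ as an iterated integral, integrating first along lines parallel to the $x_k$-axis. On almost every such line, $u$ is locally absolutely continuous while $\varphi$ is smooth with compact support, so the one-variable integration by parts formula holds with no boundary contribution. Reassembling by Fubini gives $\int_\Omega u\, \partial_{x_k}\varphi\, dx = -\int_\Omega \partial_{x_k} u \cdot \varphi\, dx$, which identifies $\partial_{x_k} u$ (in the classical a.e.\ sense) as the weak partial derivative and places $u$ in $W^{1,p}(\Omega)$.

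The main technical obstacle is in the first direction: pointwise a.e.\ convergence of $u_\epsilon$ alone does not produce an absolutely continuous representative on any particular line, since a set of measure zero in $\Omega$ can intersect a given line in a set of full one-dimensional measure. The careful point is to use Fubini on $|u_\epsilon - u|^p + |\nabla u_\epsilon - \nabla u|^p$ and then select the representative of $u|_\ell$ by integration of the limiting derivative, rather than by pointwise limits of $u_\epsilon$; this is what ensures the selected representative is honestly absolutely continuous and not merely equal a.e.\ to one that is.
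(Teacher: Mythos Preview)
Your argument is correct and is the standard proof; the paper does not give its own argument for this lemma but simply refers to \cite[Theorem 2.1.4]{ziemer}, where essentially the mollification/Fubini approach you outline is carried out. The one point worth making explicit in your write-up is that the line-by-line representatives you construct must patch into a \emph{single} measurable function on $\Omega$ that is absolutely continuous along almost every line in each coordinate direction simultaneously; the cleanest fix is to observe that on each good line $\ell$ the mollifications $u_\epsilon$ actually converge uniformly on compact subintervals (since both $u_\epsilon|_\ell$ and $\partial_{x_k}u_\epsilon|_\ell$ converge in $L^p(\ell)$, the fundamental theorem of calculus forces uniform convergence), so the pointwise limit $\lim_\epsilon u_\epsilon$ furnishes one representative that works for all directions at once.
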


For a proof, see \cite[Theorem 2.1.4]{ziemer}.
More precisely, if $u \in W^{1,p}(\Omega)$, 
then there is some representative $\tilde{u}$ of $u$ for which $\tilde{u} \in ACL^p(\Omega)$.
Conversely, if $u \in ACL^p(\Omega)$,
then $u \in W^{1,p}(\Omega)$, and the weak partial derivatives of $u$ 
equal the classical partial derivatives almost everywhere.

The following definition of Sobolev mappings into the Heisenberg group has been discussed in \cite{haj_weak, haj_lack, peano, mag_mal_mon}.
The class $W^{1,p}(\Omega,\mathbb{H}^n)$ is defined differently in these references, but the definitions are proven to be equivalent in \cite[Proposition 6.8]{haj_lack}.

\begin{definition}
\label{definition}
Suppose $\Omega$ is a bounded domain in $\mathbb{R}^m$.
A mapping $F:\Omega \to \mathbb{H}^n$ 
is in the class $W^{1,p}(\Omega,\mathbb{H}^n)$ if the following two conditions hold:
\begin{enumerate}
\item $F \in W^{1,p}(\Omega,\mathbb{R}^{2n+1})$, and \label{clarity}
\item $F=(f_1,g_1,\dots,f_n,g_n,h)$ satisfies the \emph{weak contact equation}
\begin{equation}
\label{weak_contact}
\nabla h (x) = 2 \sum_{j=1}^n \left( g_j(x) \nabla f_j (x) - f_j(x) \nabla g_j (x) \right) \quad \text{a.e. } x \in \Omega.
\end{equation}
\end{enumerate}
Say that $F \in W^{1,p}_{loc}(\mathbb{R}^m,\mathbb{H}^n)$ 
if $F \in W^{1,p}_{loc}(\mathbb{R}^m,\mathbb{R}^{2n+1})$ and the weak contact equation holds for a.e. $x \in \mathbb{R}^m$.
\end{definition}

For clarification, 
item (\ref{clarity}) here means that the mapping $F$ belongs to an equivalence class of mappings
in the Banach space $W^{1,p}(\Omega,\mathbb{R}^{2n+1})$.
In particular, $W^{1,p}(\Omega,\mathbb{H}^n)$ is a collection of mappings 
rather than a collection of equivalence classes.

Notice that the weak contact condition \eqref{weak_contact} may also be written as follows:
$$
\text{im }DF(x) \subset H_{F(x)}\mathbb{H}^n \quad \text{ for a.e. } x \in \Omega
$$
where $DF$ is the weak differential of $F$.
Consider the projection mapping $\pi$ from $\mathbb{R}^{2n+1}$ onto its first $2n$ coordinates.
It follows from the definition of the metric on the horizontal space that 
$d \pi(p):H_{p}\mathbb{H}^n \to T_{\pi(p)} \mathbb{R}^{2n}$
is an isometry for any $p \in \mathbb{H}^n$.
Hence, for almost every $x \in \Omega$,
the norm of the linear map $DF(x):T_x \mathbb{R}^m \to H_{F(x)}\mathbb{H}^n$ 
is equal to the norm of $D(\pi \circ F)(x):T_x \mathbb{R}^m \to T_{\pi(F(x))} \mathbb{R}^{2n}$.
This is why the quantitative estimates at the end of the statement of Theorem~\ref{main} 
only apply to the partial derivatives of the first $2n$ components of $F$.

As we will now see, this definition gives a sufficient condition
for a mapping to be in the class $AR^{1,p}(\Omega,\mathbb{H}^n)$.
Recall the following definition of the Ambrosio-Reshetnyak-Sobolev class from the introduction.
For $1 \leq p < \infty$ and a bounded domain $\Omega$ in $\mathbb{R}^m$,
a mapping $F: \Omega \to Y$ belongs to the class $AR^{1,p}(\Omega,Y)$
if there is a non-negative function $g \in L^p(\Omega)$ satisfying the following:
for any $K$-Lipschitz $\phi:Y \to \mathbb{R}$, 
we have $\phi \circ F \in W^{1,p}(\Omega)$ 
and $|\partial (\phi \circ F) / \partial x_k(x)| \leq K g(x)$ 
for $k=1,\dots,m$ and almost every $x \in \Omega$.
As above, $AR^{1,p}(\Omega,Y)$ is a collection of mappings rather than a collection of equivalence classes.

\begin{proposition}
\label{heisenberg_sobolev}
Suppose $\Omega$ is a bounded domain in $\mathbb{R}^m$ and $1 \leq p < \infty$.
Then $W^{1,p}(\Omega,\mathbb{H}^n) \subset AR^{1,p}(\Omega,\mathbb{H}^n)$.
Furthermore, if $F \in AR^{1,p}(\Omega,\mathbb{H}^n)$ is bounded, 
then $F \in W^{1,p}(\Omega,\mathbb{H}^n)$.
\end{proposition}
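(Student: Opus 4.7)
The plan is to handle the two implications separately. For the inclusion $W^{1,p}(\Omega,\mathbb{H}^n) \subset AR^{1,p}(\Omega,\mathbb{H}^n)$ I would rely on the ACL characterization (Lemma~\ref{aclp}) and take
\[
g(x) = \Bigl(\sum_{k=1}^m \sum_{j=1}^n \bigl[(\partial f_j/\partial x_k)^2 + (\partial g_j/\partial x_k)^2\bigr]\Bigr)^{1/2},
\]
which plainly lies in $L^p(\Omega)$. On $(m-1)$-almost every line $\ell$ parallel to a coordinate axis, $F|_\ell$ is Euclidean absolutely continuous, and \eqref{weak_contact}, transferred to $\ell$ via Fubini and Lemma~\ref{aclp}, forces the classical tangent of $F|_\ell$ to lie in the horizontal distribution a.e. Thus $F|_\ell$ is a horizontal curve, its metric derivative equals the Euclidean speed of $\pi \circ F|_\ell$, and composing with any $K$-Lipschitz $\phi : \mathbb{H}^n \to \mathbb{R}$ yields an AC function on $\ell$ whose classical derivative is dominated by $Kg$. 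The $L^p$-integrability of $\phi \circ F$ will follow from $|\phi(F(x))| \leq |\phi(0)| + K d(F(x),0)$ and the Kor\'anyi equivalence, bounding $d(F(x),0)$ by $|\pi F(x)| + |h(x)|^{1/2}$, both of which lie in $L^p(\Omega)$ on the bounded domain $\Omega$.

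For the converse, assume $F \in AR^{1,p}(\Omega,\mathbb{H}^n)$ is bounded, so $F(\Omega) \subset B$ for some compact $B \subset \mathbb{H}^n$. I would first verify $F \in W^{1,p}(\Omega,\mathbb{R}^{2n+1})$: by \eqref{equivalent}, each Euclidean coordinate projection $p \mapsto p_j$ is CC-Lipschitz on $B$, and a McShane extension produces a globally Lipschitz $\phi_j : \mathbb{H}^n \to \mathbb{R}$ agreeing with the projection on $B$. Applying the $AR^{1,p}$ hypothesis to each $\phi_j$ gives $F_j = \phi_j \circ F \in W^{1,p}(\Omega)$. The boundedness of $F$ is essential here because the vertical coordinate $p \mapsto p_{2n+1}$ is not globally CC-Lipschitz on $\mathbb{H}^n$.

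The heart of the argument is the weak contact equation \eqref{weak_contact}, and the crucial intermediate claim is that on $(m-1)$-a.e.\ coordinate line $\ell$, $F|_\ell$ is AC as a map into $(\mathbb{H}^n,d)$. I would fix a countable dense $D \subset \mathbb{H}^n$, apply the $AR^{1,p}$ condition to the $1$-Lipschitz functions $d(\cdot,z)$ with $z \in D$, and intersect the resulting null sets to obtain a family of lines on which $t \mapsto d(F|_\ell(t),z)$ is AC for every $z \in D$ with derivative at most $g|_\ell$. The identity $d(p,q) = \sup_{z \in D}|d(p,z)-d(q,z)|$ then upgrades this to $d(F|_\ell(s),F|_\ell(t)) \leq \int_s^t g|_\ell$, so $F|_\ell$ is CC-absolutely continuous and hence Euclidean AC by \eqref{equivalent}. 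The group-law identity
\[
\bigl(F|_\ell(t)^{-1} * F|_\ell(t+h)\bigr)_{2n+1} = \Delta h + 2\sum_{j=1}^n \bigl(f_j(t)\,\Delta g_j - g_j(t)\,\Delta f_j\bigr),
\]
combined with \eqref{koranyi}, bounds the left-hand side by $C^2 d(F|_\ell(t+h),F|_\ell(t))^2$; dividing by $h$ and letting $h \to 0$ at a Lebesgue point of the metric derivative forces the limit $\partial_k h + 2\sum_j (f_j\partial_k g_j - g_j \partial_k f_j)$ to vanish on $\ell$. Fubini together with Lemma~\ref{aclp} then promotes this to \eqref{weak_contact} on all of $\Omega$.

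The main obstacle will be the promotion from ``$d(\cdot,z)\circ F \in ACL^p$ for each $z$ in a countable dense set'' to ``$F|_\ell$ is AC as a curve into $\mathbb{H}^n$'' in part (2); once this is in hand, the remainder is Kor\'anyi-style algebra and Fubini.
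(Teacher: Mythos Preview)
Your proposal is correct and follows essentially the same approach as the paper: both directions use the ACL characterization, the forward inclusion exploits that $F|_\ell$ is a horizontal curve so the CC metric derivative is controlled by the horizontal speed, and the reverse inclusion tests against the $1$-Lipschitz functions $d(\cdot,z)$ for $z$ in a countable dense set to show $F|_\ell$ is CC-absolutely continuous on almost every line. The only notable difference is that where the paper cites Pansu's result to pass from CC-absolute continuity of $F|_\ell$ to the contact equation, you carry out the Kor\'anyi-metric computation directly, which is a clean self-contained alternative.
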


A result similar to the first inclusion was proven in \cite[Proposition 6.1]{haj_weak}
by embedding $\mathbb{H}^n$ into $\ell^{\infty}$ via the Kuratowski embedding.
The reverse inclusion for bounded maps is proven in \cite[Proposition 6.8]{haj_lack}
by applying the same embedding and invoking an ACL-type result for Sobolev mappings into Banach spaces.
Different, mostly self-contained proofs relying more directly on the geometry of the Heisenberg group are given below.

\begin{proof}
Suppose $F:\Omega \to \mathbb{H}^n$ is such that
$F \in W^{1,p}(\Omega,\mathbb{R}^{2n+1})$ and satisfies \eqref{weak_contact} almost everywhere in $\Omega$.
Without loss of generality, 
we may assume that $F \in ACL^p(\Omega,\mathbb{R}^{2n+1})$.
Indeed, if $\tilde{F} \in AR^{1,p}(\Omega,\mathbb{H}^n)$ and $F = \tilde{F}$ almost everywhere in $\Omega$, 
then $F \in AR^{1,p}(\Omega,\mathbb{H}^n)$.

Fix a $K$-Lipschitz function $\phi:\mathbb{H}^n \to \mathbb{R}$.
First, notice for any $x \in \Omega$  
$$
|\phi(F(x))| 
\leq K \, d(F(x),0) + |\phi(0)|
\leq C \, K \, \Vert F(x)\Vert _K + |\phi(0)|
$$
for some $C \geq 1$ from the bi-Lipschitz equivalence of $d$ and $d_K$.
There is a constant $M \geq 1$ depending only on $n$ 
so that $\Vert p\Vert _K \leq M \, \max \{1, |p| \}$ for any $p \in \mathbb{H}^n$.
Hence, since $\Omega$ is bounded and $F \in L^p(\Omega,\mathbb{R}^{2n+1})$,
we have $\phi \circ F \in L^p(\Omega)$.

We must now show that $\phi \circ F \in W^{1,p}(\Omega)$ and find a function $g \in L^{p}(\Omega)$
which dominates the partial derivatives of $\phi \circ F$ and is independent of the choice of $\phi$.
Fix $k \in \{1,\dots,m\}$.
Choose a line $\bar{\ell}$ parallel to the $k^{th}$ coordinate axis so that 
$F$ is absolutely continuous along compact intervals in $\ell := \bar{\ell} \cap \Omega$
and so that $\partial F / \partial x_k \in L^p(\ell,\mathbb{R}^{2n+1})$.
Suppose also that $F$ satisfies \eqref{weak_contact} almost everywhere along $\ell$.
(Note that $(m-1)$-almost every $\bar{\ell}$ parallel to the $k^{th}$ coordinate axis satisfies these conditions via Fubini's theorem and Lemma \ref{aclp}.)
Choose a compact interval $[a,b] \subset \ell$.
(Here, we abuse notation and identify $\ell$ with a subset of $\mathbb{R}$.)
It follows from \eqref{weak_contact} that $\gamma:= F|_{[a,b]}:[a,b] \to \mathbb{H}^n$ is a horizontal curve.
The definition of the metric in $\mathbb{H}^n$ and \eqref{lengthbound} give
$$
|\phi(F(x)) - \phi(F(y))| 
\leq K \, d(F(x),F(y)) 
\leq K \, \ell_H (\gamma|_{[x,y]}) 
\leq K \, \ell_E (\gamma|_{[x,y]})
$$
for any $[x,y] \subset [a,b]$.
Consider the Euclidean length function $s_{\gamma}:[a,b] \to [0,\ell_E (\gamma)]$ defined as
$
s_{\gamma} (x) = \ell_E (\gamma|_{[a,x]})
$.
We can write $\ell_E(\gamma|_{[x,y]}) = | s_{\gamma}(x) - s_{\gamma}(y) |$
and conclude that 
$$
|\phi(F(x)) - \phi(F(y))|
\leq K \, | s_{\gamma}(x) - s_{\gamma}(y) |
$$
for any $x,y \in [a,b]$.
Since $\gamma$ is absolutely continuous on $[a,b]$
as a Euclidean curve, 
$s_{\gamma}$ is absolutely continuous as well (see for example \cite[Proposition 5.1.5]{sobolev_book}).
Thus $\phi \circ F$ is absolutely continuous on $[a,b]$.

We will now prove the bound on the derivative of $\phi \circ F$ along $\ell$.
Fix a point $x \in \ell$ where $\partial F / \partial x_k$ and $\partial (\phi \circ F) / \partial x_k$ exist
%where \eqref{weak_contact} holds, 
and which is a $p$-Lebesgue point of each component of $\partial F / \partial x_k$.
(Note: almost every point in $\ell$ satisfies these conditions since the partial derivative of $F$ is $p$-integrable along $\ell$.)
For any $t$ small enough so that the interval $(x,x+ te_k) \subset \Omega$, we have
\begin{align*}
&\left| \frac{\phi(F(x+te_k)) - \phi(F(x))}{t} \right|
%\leq K \, \frac{d(F(x+he_k),F(x))}{h}
\leq C \, K \, \frac{d_K(F(x+te_k),F(x))}{|t|} \\
& \hspace{.2in} = C \, K 
\Bigg( \left| \sum_{j=1}^n \left( \frac{f_j(x+te_k)-f_j(x)}{t} \right)^2 + \left(\frac{g_j(x+te_k)-g_j(x)}{t} \right)^2 \right|^2 \\
& \hspace{1in} + \left| \frac{h(x+te_k)-h(x) + 2 \sum_{j=1}^n (f_j(x)g_j(x+te_k) - f_j(x+te_k) g_j(x))}{t^2} \right|^2 \Bigg)^{1/4} \\
\end{align*}
for a constant $C>0$ depending only on the bi-Lipschitz equivalence of $d$ and $d_K$.
This final fraction above converges to $0$ as $t \to 0$.
Indeed, the proof of this fact is nearly identical to the proof of Proposition 1.4 in \cite{ME}
since $x$ is a $p$-Lebesgue point of the partial derivatives.
%also a.c. of F and existence of partials of F are needed.
Therefore, 
\begin{equation}
\label{partialbound}
\left| \frac{\partial (\phi \circ F)}{\partial x_k} (x)\right|
\leq C \, K \, \sqrt{ \sum_{j=1}^n \left( \frac{\partial f_j}{\partial x_k}(x) \right)^2 + \left(\frac{\partial g_j}{\partial x_k}(x) \right)^2 }
\leq C \, K \, \left| \frac{\partial F}{\partial x_k}(x) \right|.
\end{equation}
Define $g:\Omega \to \mathbb{R}$ as $g(x)=C \sum_{k=1}^m \left| \frac{\partial F}{\partial x_k}(x) \right|$.
Thus, for any $K$-Lipschitz $\phi:\mathbb{H}^n \to \mathbb{R}$,
we have $|\partial (\phi \circ F) / \partial x_k(x)| \leq K g(x)$ 
for almost every $x \in \Omega$ and $k=1,\dots,m$.
Since $g \in L^p(\Omega)$,
it follows that $F \in AR^{1,p}(\Omega,\mathbb{H}^n)$.

We will now prove the reverse inclusion for bounded Sobolev mappings.
Suppose $F \in AR^{1,p}(\Omega,\mathbb{H}^n)$ is bounded
and say $g \in L^p(\Omega)$ is as in the definition of the Ambrosio-Reshetnyak-Sobolev class.
By \eqref{equivalent}, the identity map $\text{id}:\mathbb{H}^n \to \mathbb{R}^{2n+1}$ 
is Lipschitz on some compact set containing $F(\Omega)$.
Thus $F = \text{id} \circ F \in W^{1,p}(\Omega, \mathbb{R}^{2n+1})$.
It remains to show that the weak contact equation \eqref{weak_contact} holds almost everywhere.
Choose a dense subset $\{p_i\}_{i=1}^{\infty}$ of $\mathbb{H}^n$.
(This is possible since $\mathbb{H}^n$ and $\mathbb{R}^{2n+1}$ are topologically equivalent.)
Define the 1-Lipschitz maps
$\phi_{i}: \mathbb{H}^n \to \mathbb{R}$
as $\phi_{i}(x) = d(x,p_i)$.
Therefore, in $\Omega$ along $(m-1)$-almost every line parallel to a coordinate axis, 
$\phi_i \circ F$ is absolutely continuous (after possibly redefining $F$ on a set of measure zero),
$g$ is $p$-integrable,
and $|\partial (\phi_i \circ F) / \partial x_k| \leq g$ almost everywhere 
for all $i \in \mathbb{N}$.
For $k \in \{ 1, \dots, m\}$,
fix such a line $\bar{\ell}$ parallel to the $k^{th}$ axis 
and write $\ell = \bar{\ell} \cap \Omega$.

By Fubini's theorem, it suffices to prove that \eqref{weak_contact} holds almost everywhere along $\ell$.
Choose an interval $[x,x+te_k] \subset \ell$.
Fix $s_1,s_2 \in [0,t]$. 
Let $\varepsilon > 0$ and choose $p_i \in \mathbb{H}^n$
so that $2d(F(x+s_1e_k),p_i) < \varepsilon$.
Then we have
\begin{align*}
d(F(x+ s_2e_k),F(x+s_1e_k)) - \varepsilon
&\leq d(F(x+ s_2e_k),F(x+s_1e_k)) - 2 d(F(x+ s_1e_k), p_i) \\
&\leq d(F(x+ s_2e_k),p_i) - d(F(x+ s_1e_k), p_i) \\
&=\phi_i(F(x+ s_2e_k)) - \phi_i(F(x+s_1e_k)) \\
&= \int_{s_1}^{s_2} \frac{d}{d \tau} \, (\phi_i \circ F)(x+\tau e_k) \, d \tau \\
&\leq \int_{s_1}^{s_2} g(x+\tau e_k) \, d\tau.
\end{align*}
Since $\varepsilon >0$ was chosen arbitrarily, it follows that
$$
d(F(x+ s_2e_k),F(x+s_1e_k)) \leq \int_{s_1}^{s_2} g(x+\tau e_k) \, d\tau
$$
for any $s_1,s_2 \in [0,t]$.
By the integrability of $g$ along $\ell$,
the mapping $F$ is absolutely continuous with respect to the metric $d$ along compact intervals in $\ell$.
Hence \eqref{weak_contact} holds almost everywhere along $\ell$
as a result of Proposition 4.1 in \cite{pansu}.
This completes the proof of the proposition.
\end{proof}

Notice in \eqref{partialbound} that only the first $2n$ components of $F$
appear in the bound of the partial derivatives of $\phi \circ F$.
Compare this to the bound in Theorem~\ref{main}
and to the discussion following Definition~\ref{definition}.

We will conclude the section with the proof of Proposition~\ref{sharp}.

\begin{proof}[Proof of Proposition \ref{sharp}]
Define $f:S^n \to \mathbb{H}^{n}$ to be the embedding from Theorem~\ref{embedding}.
Suppose we have a mapping $F:B^{n+1} \to \mathbb{H}^n$ 
satisfying $F \in W^{1,n+1}(B^{n+1},\mathbb{H}^n)$ and $F|_{S^n} = f$.
By the definition of $W^{1,n+1}(B^{n+1}, \mathbb{H}^n)$ and Theorem 1.4 in \cite{haj_weak},
$\text{rank} \, DF(x) \leq n$ for almost every $x \in B^{n+1}$.
Since $f^{-1}:f(S^n) \to S^n$ is $C^1$, 
we may find a $C^1$ extension $\Psi:\mathbb{R}^{2n+1} \to \mathbb{R}^{n+1}$ of $f^{-1}$
so that $|D \Psi| \leq M$ for some $M>0$.
Now, choose a sequence $\{F_k\}$ of mappings $F_k:B^{n+1} \to \mathbb{R}^{2n+1}$ which are $C^1$ up to the boundary
and which satisfy the following:
\begin{itemize}
\item $\Vert F_k - F\Vert _{W^{1,n+1}} \to 0$ as $k \to \infty$,
\item $\mathcal{H}^{n+1}(\{ F_k \neq F \}) \to 0$ as $k \to \infty$,
\item and $F_k = F = f$ on $S^n$ for any $k \in \mathbb{N}$
\end{itemize}
(see, for example, Theorem 5 and the proof of Theorem 2 in \cite{haj_geo}.)
Fix $k \in \mathbb{N}$.
Since $\Psi \circ F_k$ is continuous on $B^{n+1}$ and equals the identity map on $S^n$,
Brouwer's theorem implies $B^{n+1} \subset (\Psi \circ F_k)(B^{n+1})$.
Additionally, $|J(\Psi \circ F_k)| \leq M|J F_k|$.
Here, the Jacobian $|J F_k|$ is understood in the following sense:
$$
|J F_k(x)| = \sqrt{\det \, ((DF_k)^T DF_k) (x)} \quad \text{for all } x \in B^{n+1}.
$$
Thus
$$
M \int_{B^{n+1}} |JF_k| \geq \int_{B^{n+1}} |J(\Psi \circ F_k)| \geq \mathcal{H}^{n+1}((\Psi \circ F_k)(B^{n+1})) \geq \mathcal{H}^{n+1}(B^{n+1}).
$$
Since $\text{rank} \, DF(x) \leq n$ for almost every $x \in B^{n+1}$,
it follows that $|JF_k| = 0$ almost everywhere on $\{F_k = F\}$.
Therefore
$$
0 
< \frac{\mathcal{H}^{n+1}(B^{n+1})}{M} 
\leq \int_{B^{n+1}} |JF_k|
= \int_{\{ F_k \neq F \}} |JF_k|.
$$
However, $\mathcal{H}^{n+1}(\{F_k \neq F\}) \to 0$, and
$|JF_k|$ converges to $|JF|$ in $L^1$
due to the convergence of $F_k$ to $F$ in $W^{1,n+1}$
since the Jacobian consists of sums of $(n+1)$-fold products of derivatives.
Thus this last integral vanishes as $k \to \infty$.
This leads to a contradiction and completes the proof.
\end{proof}

\section{Whitney triangulation and Lipschitz extensions}
\label{AN}

Suppose $Z \subset \mathbb{R}^m$ is closed.
As in the proof of many extension theorems,
we will decompose the complement of $Z$ into Whitney cubes.
We will then go one step further and construct the Whitney triangulation of the complement of $Z$ as in \cite{vaisala}.
We must first introduce some notation.
For any $k \in \{0,1,\dots,m\}$, a \emph{(non-degenerate)} $k$\emph{-simplex} in $\mathbb{R}^m$
is the convex hull of $k+1$ \emph{vertices} $\{e_0,e_1,\dots,e_k\} \subset \mathbb{R}^m$ where the vectors
$e_1-e_0,\dots,e_k-e_0$ are linearly independent.
An $\ell$-face $\omega$ of a $k$-simplex $\sigma$ 
is the convex hull of any subset $\{ e_{i_0},\dots,e_{i_{\ell}} \}$ of vertices of $\sigma$.
Denote by $\partial \omega$ the union of all $(\ell - 1)$-faces of $\omega$.
%The \emph{barycenter} of a simplex is the center of mass of its vertices.
Note that, since we define simplices to be nondegenerate, 
the barycenter of a simplex does not lie in any of its faces.
A \emph{simplicial complex} $\Sigma$ in $\mathbb{R}^m$ is a (possibly infinite) set 
consisting of simplices in $\mathbb{R}^m$ so that 
any face of a simplex in $\Sigma$ is an element of $\Sigma$ 
and the intersection of any two simplices in $\Sigma$ is either empty or is itself an element of $\Sigma$.
The dimension of $\Sigma$ is the largest $k$ so that $\Sigma$ contains a $k$-simplex.
(Notice that the dimension of a simplicial complex in $\mathbb{R}^m$ is at most $m$.)
For any $k \in \{0,1,\dots,m\}$, the $k$\emph{-skeleton} of $\Sigma$ (denoted $\Sigma^{(k)}$) 
is the subset of $\mathbb{R}^m$ consisting of the union of all $k$-simplices in $\Sigma$.
Similarly, the $\ell$\emph{-skeleton} $\Sigma_{\sigma}^{(\ell)}$ of a $k$-simplex $\sigma$, $0\leq \ell \leq k$, is the union of all $\ell$-faces of $\sigma$.
Finally, we will write $B(k,\ell) := \binom{k+1}{\ell+1}$.
This is the number of $\ell$-faces of a $k$-simplex.

Suppose $\Sigma$ is a simplicial complex in $\mathbb{R}^m$.
For each $\ell \in \{ 1,\dots,m \}$ and any $\ell$-simplex $\omega \in \Sigma$ with barycenter $c$, say 
$\beta(\omega)$ is the minimum over all distances $d(c,P)$ where $P$ is an $(\ell-1)$-plane containing an $(\ell-1)$-face of $\omega$.
In particular, $\beta(\omega)>0$.
Similarly, say $B(\omega)$ is the maximum over all such distances.
For any $m$-simplex $\sigma$, write 
$$
\beta_{\sigma} = \min \left\{ \beta(\omega) \, : \, \omega \text{ is an } \ell \text{-face of } \sigma \text{ for some } \ell \in \{ 1 ,\dots, m\} \right\}
$$
and
$$
B_{\sigma} = \max \left\{ B(\omega) \, : \, \omega \text{ is an } \ell \text{-face of } \sigma \text{ for some } \ell \in \{ 1 ,\dots, m\} \right\}.
$$
That is, $\beta_{\sigma}$ is a lower bound on the ``flatness'' of $\sigma$, 
and $B_{\sigma}$ is an upper bound.
We are now ready to define the Whitney triangulation of $\mathbb{R}^m \setminus Z$.
This lemma is a minor modification of the results in \cite[Section 5.1]{vaisala}.
\begin{lemma}[Whitney Triangulation]
\label{triangulate}
Suppose $Z \subset \mathbb{R}^m$ is closed.
Then there is an $m$-dimensional simplicial complex $\Sigma$ in $\mathbb{R}^m$ so that 
$\Sigma^{(m)} = \mathbb{R}^m \setminus Z$
and the following hold for some constants $D_1,D_2>0$ (which depend only on $m$) 
and any $m$-simplex $\sigma \in \Sigma$:
\begin{equation}
\label{size}
\diam(\sigma) 
\leq d(\sigma,Z) 
\leq 12 \sqrt{m} \, \diam(\sigma),
\end{equation}
\begin{equation}
\label{flat}
D_1 < \frac{\diam(\sigma)}{B_{\sigma}} \leq \frac{\diam(\sigma)}{\beta_{\sigma}} < D_2.
\end{equation}
\end{lemma}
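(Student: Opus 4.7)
The plan is to combine the classical Whitney cube decomposition with a compatible simplicial subdivision, following the scheme of Väisälä \cite{vaisala}.

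First, apply the standard Whitney decomposition to the open set $\mathbb{R}^m \setminus Z$, obtaining a family $\{Q_i\}$ of closed dyadic cubes with pairwise disjoint interiors whose union is $\mathbb{R}^m \setminus Z$ and which satisfies $\diam(Q_i) \leq d(Q_i, Z) \leq 4\sqrt{m}\,\diam(Q_i)$ (one can adjust the constant here by scaling). A standard consequence is that whenever $Q_i$ and $Q_j$ touch, their side lengths differ by at most a fixed factor, so only finitely many (dimensional) configurations of neighboring cubes can arise.

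Next, subdivide each cube simplicially by coning from its barycenter to a triangulation of its boundary. The main obstacle is compatibility: when several smaller cubes meet a larger cube along a common face, the triangulations from either side must agree on that face, otherwise one obtains merely a cellular decomposition rather than a simplicial complex. This is handled inductively on the dimension of shared faces: place auxiliary vertices at the barycenters of the smaller cubes' faces lying on the common face, triangulate the enriched face canonically, and then cone from the barycenter of the larger cube to fill its interior. The bounded size discrepancy from the first step guarantees that only a dimensionally bounded number of distinct subdivision patterns appear. The resulting complex $\Sigma$ satisfies $\Sigma^{(m)} = \mathbb{R}^m \setminus Z$.

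Finally, verify the two estimates. Each $m$-simplex $\sigma \in \Sigma$ lies in a unique Whitney cube $Q$, and the subdivision produces $c_1 \diam(Q) \leq \diam(\sigma) \leq \diam(Q)$ for a constant $c_1 = c_1(m) > 0$; combined with the Whitney size estimate this gives \eqref{size} after adjusting the constant from $4\sqrt{m}/c_1$ to $12\sqrt{m}$. For the flatness estimate \eqref{flat}, the finite combinatorial variety of subdivision patterns means that only finitely many similarity classes of $m$-simplices appear in $\Sigma$, so the ratios $\diam(\sigma)/\beta_\sigma$ and $\diam(\sigma)/B_\sigma$ take only finitely many values and are uniformly bounded above and below by constants $D_1, D_2$ depending only on $m$. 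I expect the principal difficulty to be carrying out Step~2 explicitly: producing a genuine simplicial (not cellular) subdivision that is compatible across faces of Whitney cubes of different sizes while keeping the combinatorial complexity bounded by a dimensional constant. This is exactly what Väisälä's construction accomplishes, and the \emph{minor modification} referenced in the lemma amounts to bookkeeping the constants $12\sqrt{m}$, $D_1$, and $D_2$ rather than altering the triangulation scheme itself.
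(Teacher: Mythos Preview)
Your proposal is correct and follows essentially the same approach as the paper: start from the Whitney cube decomposition, build the triangulation inductively by coning from cube centers over the already-triangulated boundaries (exactly V\"ais\"al\"a's scheme), derive \eqref{size} from the Whitney size estimate together with the fact that each $m$-simplex has diameter comparable to its ambient cube, and derive \eqref{flat} from the observation that only finitely many similarity classes of simplices occur. The paper's write-up is slightly more explicit about the induction (vertices $\to$ edge midpoints $\to$ cone from $k$-cube centers) but the content is the same.
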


Intuitively, the second condition here implies that the simplices in $\Sigma$ are uniformly far from being degenerate.

\begin{proof}
As in \cite{grafakos}, 
there is a decomposition of the open set $\mathbb{R}^m \setminus Z$ into a family of 
closed dyadic cubes $\{ Q_i \}$ with pairwise disjoint interiors
so that 
\begin{enumerate}[({A}1)]
\item $\bigcup_{i=1}^{\infty} Q_i = \mathbb{R}^m \setminus Z$,
\item $\diam(Q_i) \leq d(Q_i,Z) \leq 4 \diam(Q_i)$ for every $i \in \mathbb{N}$,
%\item if $Q_i \cap Q_j \neq \emptyset$, then $\frac{1}{4} \diam(Q_i) \leq \diam(Q_j)$, and
\item for any $i \in \mathbb{N}$, at most $12^m$ cubes $Q_j$ intersect $Q_i$ nontrivially.
\end{enumerate}
From this cubic decomposition, 
we will construct the Whitney triangulation inductively as in \cite{vaisala}.
The collection of the vertices of the cubes is trivially a 0-dimensional simplicial complex $\Sigma_0$.
We define $\Sigma_1$ by dividing each edge of a Whitney cube into two 1-dimensional simplices (segments) 
at its midpoint.
Fix $k \in \{2,\dots,m\}$, and suppose a simplicial complex $\Sigma_{k-1}$ has been constructed 
on the union of the $(k-1)$-cubes by dividing them into simplices.
Choose some $k$-cube $Q$ in the Whitney decomposition.
The union of the faces of $Q$ is the $k$-skeleton of a subcomplex of $\Sigma_{k-1}$.
(Recall that the $k$-skeleton is a subset of $\mathbb{R}^m$ rather than a subset of the simplicial complex.)
For each $(k-1)$-simplex in this subcomplex, 
create a $k$-simplex by appending the center of $Q$ to the set of its vertices.
This provides a simplicial subdivision of $Q$ 
and thus a simplicial complex $\Sigma_k$ on the union of the $k$-cubes.
%since the intersection of any two $k$-faces of Whitney cubes is a lower dimensional face,
Continuing in this way creates $\Sigma = \Sigma_m$.

Condition \eqref{size} follows immediately from (A2) since, for any $m$-cube $Q$,
the diameter of an $m$-simplex in $Q$ is at least half of the side length of $Q$.
%Since the center of Q is always a vertex of this simplex.
We will say that two simplices in $\Sigma$ are equivalent if one can be obtained from the other 
via a rotation, translation, and homothetic dilation.
There are only finitely many equivalence classes of simplices in $\Sigma$ as a result of (A3).
%\Sigma is based entirely on the location of the vertices, but there are finitely many configurations of those.
Since $\diam(\sigma)/B_{\sigma}$ and $\diam(\sigma)/\beta_{\sigma}$ are invariant under 
rotations, translations, and homothetic dilations, we have \eqref{flat}.
\end{proof}

The following Lipschitz extension result will be essential to the construction in the proof of Theorem~\ref{main2}.
Though the proof of this extension lemma is elementary and similar to classical results
(see for example \cite{almgren,extend_banach}), 
it is included here for completeness.
Recall that a metric space $Y$ is 
Lipschitz $(n-1)$-connected if there is a constant $\gamma \geq 1$ so that 
any $L$-Lipschitz map $f:S^k \to Y$ ($L>0$) has a 
$\gamma L$-Lipschitz extension $F:B^{k+1} \to Y$ for $k =0,1,\dots,n-1$. 

\begin{lemma}
\label{wengeryoung}
Fix positive integers $m > n$.
Suppose $Y$ is Lipschitz $(n-1)$-connected with constant $\gamma$, 
and $Z \subset \mathbb{R}^m$ is closed. 
Say $\Sigma$ is the Whitney triangulation of $\mathbb{R}^m \setminus Z$
constructed in Lemma~\ref{triangulate}.
Then there is a constant $\tilde{C} \geq 1$ depending only on $m$, $n$, and $\gamma$
such that every $L$-Lipschitz map $f:Z \to Y$
has an extension $\tilde{f}:Z \cup \Sigma^{(n)} \to Y$
satisfying the following:
\begin{enumerate}
\item $\tilde{f}$ is $L\tilde{C}$-Lipschitz on any $n$-simplex in $\Sigma$, and
\item for any $a \in \Sigma^{(0)}$, $\tilde{f}(a) = f(z_a)$ for some
$z_a \in Z$ with $|z_a - a| = d(a,Z)$.
\end{enumerate}
\end{lemma}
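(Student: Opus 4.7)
The plan is to build $\tilde f$ skeleton-by-skeleton, starting from a canonical choice on the $0$-skeleton and using Lipschitz $(n-1)$-connectivity to cross each successive dimension. First I would define $\tilde f$ on $\Sigma^{(0)}$ by choosing, for every vertex $a$, a nearest point $z_a\in Z$ and setting $\tilde f(a)=f(z_a)$. The key preliminary estimate is that if $a,b$ are vertices of a common $m$-simplex $\sigma$ then
\[
|z_a-z_b|\le|z_a-a|+|a-b|+|b-z_b|\le 2\bigl(d(\sigma,Z)+\diam(\sigma)\bigr)+\diam(\sigma)\le c_1(m)\,\diam(\sigma),
\]
using \eqref{size}. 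Hence $d_Y(\tilde f(a),\tilde f(b))\le L c_1 \diam(\sigma)$, and by the flatness estimate \eqref{flat} every edge of $\sigma$ has length comparable to $\diam(\sigma)$ with constants depending only on $m$, so $\tilde f$ restricted to the $0$-skeleton of $\sigma$ is controlled by $LC_0$ for some $C_0=C_0(m)$ once we divide by the relevant edge lengths.

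Next, working inside a single $m$-simplex $\sigma$ at a time, I would extend $\tilde f$ from the $k$-skeleton $\Sigma_\sigma^{(k)}$ to $\Sigma_\sigma^{(k+1)}$ for $k=0,1,\dots,n-1$. Fix a $(k+1)$-face $\omega$ of $\sigma$. By \eqref{flat}, $\omega$ admits a bi-Lipschitz homeomorphism $\phi_\omega:B^{k+1}\to\omega$ with $\lip(\phi_\omega)\le A\,\diam(\omega)$ and $\lip(\phi_\omega^{-1})\le A/\diam(\omega)$, where $A$ depends only on $m,D_1,D_2$, and moreover $\phi_\omega|_{S^k}$ is a bi-Lipschitz homeomorphism onto $\partial\omega$ with the same kind of constants. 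Assuming inductively that $\tilde f|_{\partial\omega}$ is $K_kL$-Lipschitz, the pulled-back map $\tilde f\circ\phi_\omega|_{S^k}:S^k\to Y$ is $AK_kL\,\diam(\omega)$-Lipschitz. Applying Lipschitz $(n-1)$-connectivity of $Y$ produces a $\gamma AK_kL\,\diam(\omega)$-Lipschitz extension $F:B^{k+1}\to Y$; set $\tilde f|_\omega:=F\circ\phi_\omega^{-1}$. This is $\gamma A^2K_kL$-Lipschitz on $\omega$, so we may take $K_{k+1}=\gamma A^2K_k$, giving $\tilde C=K_n=C_0(\gamma A^2)^n$ after $n$ iterations. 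Since each face is shared among several $m$-simplices, I would carry out the extension once per face (not once per simplex), so consistency across simplices is automatic.

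The main obstacle, and the place where the flatness condition \eqref{flat} is doing essential work, is controlling the Lipschitz norm of $\tilde f$ on $\partial\omega$ rather than just on each individual $k$-face composing it. For two points $p,q$ lying on different $k$-faces of $\partial\omega$, the Euclidean distance $|p-q|$ may be strictly smaller than the intrinsic distance inside $\partial\omega$; however, because the simplices are uniformly fat, $\partial\omega$ with its Euclidean metric is bi-Lipschitz equivalent (with constants depending only on $m$ and on $D_1,D_2$) to $S^k$ via $\phi_\omega|_{S^k}$, so a Lipschitz bound on each $k$-face upgrades to a Lipschitz bound on all of $\partial\omega$ with only a multiplicative loss that is already absorbed into the constant $A$ above. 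Once this geometric lemma is in place the induction closes cleanly and yields the claimed $\tilde C=\tilde C(m,n,\gamma)$.
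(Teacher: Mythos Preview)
Your proposal is correct and follows essentially the same skeleton-by-skeleton strategy as the paper: define $\tilde f$ on vertices via nearest-point projection to $Z$, then use Lipschitz $(n-1)$-connectivity to climb one dimension at a time, with constants controlled by the uniform fatness \eqref{flat} of the Whitney simplices. The one place your argument diverges is in upgrading a face-wise Lipschitz bound to a Lipschitz bound on all of $\partial\omega$. The paper does this via a short explicit lemma (Lemma~\ref{lippoint}): for $x,y$ in two different $k$-faces of a $(k+1)$-simplex, project onto their common $(k-1)$-face to find $v$ with $|x-v|+|v-y|\le\mu|x-y|$, where $\mu$ depends only on $m$ because there are only finitely many similarity classes of Whitney simplices. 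You instead invoke a uniform bi-Lipschitz parametrization $\phi_\omega|_{S^k}:S^k\to\partial\omega$ and use that the intrinsic and chord metrics on $S^k$ are comparable. Both routes are valid; the paper's is slightly more elementary and gives the constant more explicitly, while yours packages the fatness hypothesis into a single parametrization that also handles the application of Lipschitz connectivity (which the paper leaves implicit).
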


\begin{proof}
Fix an $L$-Lipschitz map $f:Z \to Y$.
For each $a \in \Sigma^{(0)}$ (that is, each vertex of a simplex in $\Sigma$), 
choose a nearest point $z_a \in Z$ 
i.e. $|z_a - a| = d(a,Z)$.
Define the mapping $f^{(0)}:\Sigma^{(0)} \to Y$ as $f^{(0)}(a) := f(z_a)$.
Write $C_0 := D_2(12 \sqrt{m}+1) + 1$ where $D_2$ is the constant from condition \eqref{flat} in Lemma \ref{triangulate}.
Fix a $1$-simplex $\sigma^1$ in $\Sigma$ (that is, an edge of some $m$-simplex $\sigma$).
Write $\partial \sigma^1 = \{a,b\}$.
Then 
\begin{align*}
d(f^{(0)}(a),f^{(0)}(b)) 
&= d(f(z_a),f(z_b)) 
\leq L |z_a - z_b| 
\leq L(|z_a - a| + |z_b-b| + |a-b|) \\
&=L( d(a,Z) + d(b,Z) + |a-b|) 
\leq L( 2 \, d(\sigma,Z) + 2 \, \diam(\sigma) + |a-b|) \\
&\leq L((24 \sqrt{m} + 2) \, \diam(\sigma) + |a-b|) < L (D_2(12 \sqrt{m} + 1) +1) |a-b|.
\end{align*}
since $\beta_{\sigma} \leq \frac{1}{2}|a-b|$.
That is, $f^{(0)}$ is $LC_0$-Lipschitz continuous on $\partial \sigma^1$.

By the Lipschitz connectivity of $Y$, there is a constant $C_1 >0$ depending only on $C_0$, $n$, 
and $\gamma$ (and hence only on $m$, $n$, and $\gamma$)
and an $L C_1$-Lipschitz extension $f^{(1)}:\sigma^1 \to Y$ of $f^{(0)}$.
Since the intersection of any two $1$-simplices in $\Sigma$ is a vertex or empty, 
we can define a map $f^{(1)}:\Sigma^{(1)} \to Y$ 
which is $LC_1$-Lipschitz on any 1-simplex in $\Sigma$.
%(Notice that we do not claim that $f^{(1)}$ is $LC_1$-Lipschitz on all of $\Sigma^{(1)}$.)

Fix $k \in \{2,\dots,n \}$.
Suppose there is a constant $C_{k-1}$ (depending only on $m$, $n$, and $\gamma$)
and a map $f^{(k-1)} : \Sigma^{(k-1)} \to Y$ 
so that $f^{(k-1)}$ is $LC_{k-1}$-Lipschitz on any 
$(k-1)$-simplex in $\Sigma$. %(but not necessarily on $\Sigma^{(k-1)}$).
Choose a $k$-simplex $\sigma^k$ in $\Sigma$.
We will first determine the Lipschitz constant of $f^{(k-1)}$ restricted to $\partial \sigma^{k}$.
Say $x,y \in \partial \sigma^{k}$.
If $x$ and $y$ lie in the same $(k-1)$-face of $\sigma^k$, 
then $d(f^{(k-1)}(x),f^{(k-1)}(y)) \leq L C_{k-1} |x-y|$.
Suppose $x$ and $y$ lie in different $(k-1)$-faces $\sigma_x^{k-1}$ and $\sigma_y^{k-1}$ of $\sigma^k$. %Notice that $\sigma_x^{k-1} \cap \sigma_y^{k-1}$ is a $(k-2)$-face of $\sigma^k$.
We have the following simple lemma.

\begin{lemma}
\label{lippoint}
Fix $j \in \{1, \dots, m-1 \}$.
There is a constant $\mu\geq 1$ depending only on $m$ satisfying the following:
suppose $\omega_1$ and $\omega_2$ are $j$-faces of a $(j+1)$-simplex $\omega \in \Sigma$,
and $x \in \omega_1$ and $y \in \omega_2$.
Then there is a point $v \in \omega_1 \cap \omega_2$ so that 
\begin{equation}
\label{point}
|x-v| + |v-y| \leq \mu |x-y|.
\end{equation}
\end{lemma}
\begin{proof}
Choose $v$ to be the orthogonal projection of $x$ or $y$ onto $\omega_1 \cap \omega_2$.
Since there are only finitely many possible angles 
at which the faces of the simplices in the Whitney triangulation can meet,
the law of sines provides a uniform bound for the ratios $|x-v|/|x-y|$ and $|y-v|/|x-y|$. 
That is, we may choose $\mu$ satisfying \eqref{point} 
independent of the choice of faces $\omega_1$ and $\omega_2$ and simplex $\omega$.
\end{proof}

By applying the lemma to the faces $\sigma_x^{k-1}$ and $\sigma_y^{k-1}$ of $\sigma^k$, 
we have
\begin{align*}
d(f^{(k-1)}(x),f^{(k-1)}(y)) 
&\leq d(f^{(k-1)}(x),f^{(k-1)}(v)) + d(f^{(k-1)}(v),f^{(k-1)}(y)) \\
&\leq LC_{k-1}|x-v| + LC_{k-1}|v-y| 
\leq \mu LC_{k-1}|x-y|
\end{align*}
since $f^{(k-1)}$ is $L C_{k-1}$-Lipschitz 
when restricted to each of $\sigma_x^{k-1}$ and $\sigma_y^{k-1}$.
Hence $f^{(k-1)}$ is $\mu LC_{k-1}$-Lipschitz on $\partial \sigma^k$.
Therefore the Lipschitz connectivity of $Y$ gives a
constant $C_k$ depending only on $m$, $n$, $\gamma$, and $C_{k-1}$
and an $LC_k$-Lipschitz extension $f^{(k)}:\sigma^k \to Y$ of $f^{(k-1)}$.
Since the intersection of any two $k$-simplices is a lower dimensional simplex (or empty), 
we may define a mapping $f^{(k)}:\Sigma^{(k)} \to Y$ 
which is $LC_k$-Lipschitz on each $k$-simplex in $\Sigma$.

Continuing this construction inductively gives a constant $C_n$ (depending only on $m$, $n$, and $\gamma$)
and a map $f^{(n)}: \Sigma^{(n)} \to Y$ 
so that $f^{(n)}$ is $LC_n$-Lipschitz on any $n$-simplex in $\Sigma$.
Setting $\tilde{f} := f^{(n)}$ and $\tilde{C} := C_n$ completes the proof.
\end{proof}

\section{Proofs of Theorem \ref{main2} and Theorem \ref{main}}
\label{sec_main}

The proof of Theorem~\ref{main2} is presented here.
We will conclude the section with the proof of Theorem~\ref{main}.
It will follow as a simple consequence of Proposition~\ref{heisenberg_sobolev}
since the extension we construct will be bounded in $\mathbb{H}^n$.

\begin{proof}[Proof of Theorem~\ref{main2}]
Fix $1 \leq p < n+1$ and let $\Omega$ be a bounded domain in $\mathbb{R}^m$.
Suppose $Y$ is a Lipschitz $(n-1)$-connected metric space with constant $\gamma$.
Let $Z \subset \Omega$ be compact and nonempty, and suppose $f:Z \to Y$ is $L$-Lipschitz.

If $m \leq n$, then it can be seen from classical results \cite{almgren,extend_banach}
that there is a constant $C = C(n,\gamma)$ 
and a $CL$-Lipschitz extension $F:\mathbb{R}^m \to Y$ of $f$.
The proof of this fact is similar to the proof of Lemma~\ref{wengeryoung}.
Hence $\phi \circ F$ is $KCL$-Lipschitz for any $K$-Lipschitz function $\phi:Y \to \mathbb{R}$.
Moreover, for $k=1,\dots,m$, $\partial(\phi \circ F) / \partial x_k$ exists and is bounded by $Kg$ almost everywhere in $\Omega$
where $g:\Omega \to \mathbb{R}$ is the constant function $g \equiv CL$.
Thus $F \in AR^{1,p}(\Omega,Y)$,
and $\Vert g\Vert _{L^p(\Omega)} \leq CL|\Omega|^{1/p} \leq CL \left( \diam(\Omega) \right)^{m/p}$
for a constant $C$ depending only on $m$, $n$, and $\gamma$.
We may therefore assume for the remainder of the proof that $m > n$.

Define the Whitney triangulation of $\mathbb{R}^m \setminus Z$ as in Lemma \ref{triangulate}.
We will restrict our attention to the $m$-dimensional simplicial sub-complex $\Sigma$
consisting of those simplices in the Whitney triangulation
which are contained in a Whitney cube $Q$ with $Q \cap \Omega \neq \emptyset$.
We consider this restriction so that 
$
\sup \{ \diam(\sigma) \, | \, \sigma \in \Sigma \} < \infty
$
(since $\Omega$ is bounded).
Note also that $\Omega \setminus Z \subset \Sigma^{(m)}$.

Suppose $\sigma$ is an $m$-simplex in $\Sigma$.
We begin by constructing a sort of radial projection of $\sigma$ onto its $n$-skeleton.
This is the so called zero degree homogenization mentioned in the introduction.
Denote by $c$ the barycenter of $\sigma$.
For each $j \in \{ 1, \dots, m \}$, say $\{ \sigma_i^j \}_{i=1}^{B(m,j)}$ is the collection of $j$-faces of $\sigma$, 
and say $c_i^j$ is the barycenter of $\sigma_i^j$.
(Notice $\sigma_1^m = \sigma$ and $c_1^m = c$.)
Fix $j \in \{ n+1, \dots, m \}$.
For each $i \in \{ 1, \dots, B(m,j) \}$, define $P_i^j:\ \sigma_i^j \setminus \{ c_i^j \} \to \partial \sigma_i^j$ 
to be the projection of $\sigma_i^j \setminus \{c_i^j\}$ onto $\partial \sigma_i^j$ radially out from $c_i^j$.
That is, for $x \in \sigma_i^j \setminus \{c_i^j\}$
if we write $x = c_i^j + t(z-c_i^j)$ 
with $t \in (0,1]$ and $z \in \partial \sigma_i^j$,
then $P_i^j(x) = z$.
Fix $x \in \sigma_i^j \setminus \{c_i^j\}$.
For all $y \in \sigma_i^j \setminus \{c_i^j\}$ close enough to $x$, we have by similar triangles
\begin{equation}
\label{toplip2}
\frac{|P_i^j(x) - P_i^j(y)|}{|x-y|} 
\leq \nu \frac{\diam(\sigma)}{|x-c_i^j|}.
\end{equation}
The constant $\nu>0$ depends only on the dimension $m$
since there are only finitely many equivalence classes of simplices in $\Sigma$.
In particular, $P_i^j$ is locally Lipschitz on $\sigma_i^j \setminus\{c_i^j\}$.
Extend $P_i^j$ to the remaining $j$-skeleton of $\sigma$ by the identity map
(that is, $P_i^j(x) = x$ for any $x \in \Sigma_{\sigma}^{(j)} \setminus \sigma_i^j$).
Writing $C^j=\{ c_1^j, \dots, c_{B(m,j)}^j \}$, we may 
define $P^j : \Sigma_{\sigma}^{(j)} \setminus C^j \to \Sigma_{\sigma}^{(j-1)}$ as $P^j := P_1^j \circ \cdots \circ P_{B(m,j)}^j$.
By arguing in a similar manner to Lemma \ref{lippoint}, 
%consider the angle <axb where a and b are in different faces and x is the center of the loc Lip neighborhood, and the angle has to be bounded by something since they're in different faces.
each $P^j$ is locally Lipschitz on $\Sigma_{\sigma}^{(j)} \setminus C^j$.

In particular, $P^m$ is locally Lipschitz on $\sigma \setminus {c}$.
Now $P^{m-1} \circ P^m$ is defined and locally Lipschitz on $\sigma$ away from the 1-dimensional set
$
\{c\} \cup (P^{m})^{-1}(C^{m-1})
$.
Similarly, 
$
P^{m-2} \circ P^{m-1} \circ P^m
$ 
is locally Lipschitz away from the 2-dimensional set 
$
\{c\} \cup (P^{m})^{-1}(C^{m-1}) \cup (P^{m-1} \circ P^m)^{-1}(C^{m-2})
$.
Continuing in this way, we see that 
$
P_{\sigma}:=P^{n+1} \circ \cdots \circ P^m : \sigma \setminus C_{\sigma} \to \Sigma_{\sigma}^{(n)}
$
is locally Lipschitz off the closed, $(m-n-1)$-dimensional set of singularities
$$
C_{\sigma} := \{c\} \cup \bigcup_{\ell = 1}^{m-(n+1)} (P^{m- \ell + 1} \circ \cdots \circ P^m)^{-1}(C^{m-\ell}).
$$
%In particular, since $m-n-1 < m$, $P_{\sigma}$ is locally Lipschitz almost everywhere on $\sigma$.

We will now build the extension $F$ of $f$.
First, construct the extension $\tilde{f}:Z \cup \Sigma^{(n)} \to Y$ of $f$ given in Lemma~\ref{wengeryoung}.
Recall that $\tilde{f}$ is $\tilde{C}L$-Lipschitz on any $n$-simplex in $\Sigma$.
In particular, $\tilde{f}$ is locally Lipschitz on $\Sigma^{(n)}$.
Enumerate the collection of $m$-simplices $\{ \sigma_i \}_{i=1}^{\infty}$ in $\Sigma$,
and write $\mathscr{C} = \bigcup_i C_{\sigma_i}$.
Define $F:\Sigma^{(m)} \cup Z \to Y$ as
$$
 F(x) =
  \begin{cases} 
      \hfill \tilde{f} (P_{\sigma_i}(x)) \hfill & \text{ if $x \in \sigma_i \setminus C_{\sigma_i}$ for some $i \in \mathbb{N}$} \\
      \hfill f(x) \hfill & \text{ if $x \in Z$} \\
  \end{cases}
$$
and define $F$ to be constant on $\mathscr{C}$.
This map is well defined since the intersection 
$\sigma_i \cap \sigma_j$ is either empty or another simplex in $\Sigma$.
Moreover, $F$ is locally Lipschitz on each $\sigma_i \setminus C_{\sigma_i}$.
We now have the following

\begin{lemma}
\label{sobolev}
Suppose $1 \leq p < n+1$.
Define $g: \Sigma^{(m)} \setminus \mathscr{C} \to [0,\infty]$ as
$$
 g(x) = \limsup_{\substack{y \to x}} \frac{d(F(x), F(y))}{|x-y|}.
$$
Then $\Vert g\Vert _{L^p(\Omega \setminus Z)} \leq CL (\diam(\Omega))^{m/p}$ 
for a constant $C>0$ depending only on $m$, $n$, $p$, and $\gamma$.
In particular, $g \in L^p(\Omega \setminus Z)$.
\end{lemma}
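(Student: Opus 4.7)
The plan is to combine a pointwise upper bound for $g$ on each $m$-simplex with iterated radial integration, then sum over the simplices of $\Sigma$. Fix $x \in \sigma \setminus C_\sigma$ for an $m$-simplex $\sigma \in \Sigma$. Since $F|_\sigma = \tilde{f} \circ P_\sigma$ and $\tilde{f}$ is $\tilde{C}L$-Lipschitz on each $n$-simplex by Lemma~\ref{wengeryoung} (with a further constant loss, via Lemma~\ref{lippoint}, when $P_\sigma(x)$ and $P_\sigma(y)$ lie in adjacent $n$-simplices), we have $g(x) \leq CL \cdot \Lip(P_\sigma)(x)$. Applying \eqref{toplip2} iteratively to $P_\sigma = P^{n+1} \circ \cdots \circ P^m$ gives
$$g(x) \;\leq\; CL \prod_{j=n+1}^{m} \frac{\diam(\sigma)}{|y_j - c(y_j)|},$$
where $y_j := P^{j+1} \circ \cdots \circ P^m(x)$, $y_m := x$, and $c(y_j)$ denotes the barycenter of the $j$-face of $\sigma$ containing $y_j$.

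Next I would integrate $g^p$ over $\sigma$ in iterated radial coordinates. Parameterize $\sigma$ via $x = c^{(m)} + t_m(y_{m-1} - c^{(m)})$ with $y_{m-1} \in \partial \sigma$ and $t_m \in (0,1]$; by \eqref{flat} the Jacobian is comparable to $t_m^{m-1}\diam(\sigma)$, and $|y_m - c(y_m)|$ is comparable to $t_m\diam(\sigma)$. Repeat this decomposition on the $(m-1)$-face of $\sigma$ containing $y_{m-1}$, and continue down to the $n$-skeleton. The $j$-th radial integral takes the form $\int_0^1 t_j^{j-1-p}\,dt_j$, which is finite precisely when $p<j$; the binding constraint is $p < n+1$, matching the hypothesis. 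Collecting the $\diam(\sigma)$ factors---each of the $m-n$ Jacobians contributes $\diam(\sigma)$, each $|y_j-c(y_j)|^{-p}$ contributes $\diam(\sigma)^{-p}$, and the $n$-dimensional measure of the base contributes $\diam(\sigma)^n$---yields
$$\int_\sigma g^p \, dx \;\leq\; C L^p \diam(\sigma)^m, \qquad C = C(m,n,p,\gamma).$$

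Finally, sum over all $m$-simplices in $\Sigma$. By \eqref{flat} we have $|\sigma|$ comparable to $\diam(\sigma)^m$, and since each Whitney cube $Q$ with $Q \cap \Omega \neq \emptyset$ satisfies $\diam(Q) \leq d(Q,Z) \leq \diam(\Omega)$ (because $Z \subset \Omega$), the set $\Sigma^{(m)}$ is contained in the $\diam(\Omega)$-neighborhood of $\Omega$, of volume at most $C\diam(\Omega)^m$. Thus
$$\|g\|_{L^p(\Omega\setminus Z)}^p \;\leq\; \sum_\sigma \int_\sigma g^p \;\leq\; C L^p \sum_\sigma |\sigma| \;\leq\; C L^p \diam(\Omega)^m,$$
which is the desired estimate. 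The main obstacle is the iterated change of variables: at each layer one must verify, uniformly in the coordinates from earlier layers, that the Jacobian combines with the inverse-distance factor into the clean integrand $t_j^{j-1-p}\diam(\sigma)^{1-p}$; this relies on the uniform flatness \eqref{flat} to control the ratios of distances that arise at different stages.
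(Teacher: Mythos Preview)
Your proposal is correct and follows essentially the same route as the paper: both arguments use the local Lipschitz bound \eqref{toplip2} for each radial projection $P^j$, integrate layer by layer using the condition $p<n+1$ to make $\int_0^1 t^{j-1-p}\,dt$ converge, and then sum $\diam(\sigma)^m$ over the Whitney simplices to obtain $CL^p\diam(\Omega)^m$. The only organizational difference is that you first package everything into a pointwise product bound and then unfold it via iterated radial coordinates, whereas the paper phrases the same computation as an induction on the skeleton dimension, bounding $\int_{\Sigma_\sigma^{(k)}}\limsup_{y\to x}\frac{d(\Phi^k(x),\Phi^k(y))^p}{|x-y|^p}\,d\mathcal{H}^k$ by $CL^p\diam(\sigma)^k$ and using a single Fubini step to pass from $k$ to $k+1$; the latter form sidesteps the bookkeeping issue you flag at the end (tracking earlier-layer coordinates uniformly), but the two arguments are the same computation.
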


The proof of this lemma is long but elementary.
It is contained, therefore, at the end of this section.
Extend $g$ to all of $\Omega$ so that $g \equiv L(\tilde{C}+4)$ on $Z \cup \mathscr{C}$.
Thus $g \in L^p(\Omega)$ and $\Vert g\Vert _{L^p(\Omega)} \leq CL (\diam(\Omega))^{m/p}$
for a constant $C=C(m,n,p,\gamma)$.

It remains to show that $F$ is in the class $AR^{1,p}(\Omega,Y)$.
Fix a $K$-Lipschitz function $\phi:Y \to \mathbb{R}$.
We will first show that $\phi \circ F \in L^p(\Omega)$.
Let $x \in \Omega \setminus (Z \cup \mathscr{C})$.
Then $x \in \sigma_i$ for some $i \in \mathbb{N}$.
Choose a vertex $a$ of $\sigma_i$ so that $a$ and $P_{\sigma_i}(x)$ lie in the same $n$-face of $\sigma_i$.
Since $F(a) = \tilde{f}(a) = f(z_a)$ as prescribed in Lemma~\ref{wengeryoung}, we have
\begin{align*}
|\phi (F(x))| 
\leq |\phi(F(x)) - \phi(F(a))| + |\phi(f(z_a))| 
\leq KL\tilde{C} \, \diam(\sigma_i) + \Vert \phi \circ f\Vert _{\infty} <M
\end{align*}
for some $M>0$.
Since $Z$ is compact and $\Omega$ is bounded, 
$\phi \circ F \in L^p(\Omega)$.

Now, we will use the ACL characterization of Sobolev mappings to show that $\phi \circ F \in W^{1,p}(\Omega)$.
Fix $k \in \{1, \dots, m\}$.
Notice that $(m-1)$-almost every line parallel to the $k^{th}$ coordinate axis
is disjoint from $\mathscr{C}$ since 
each $C_{\sigma_i}$ is $(m-n-1)$-dimensional.
Also, $g$ and $\phi \circ F$ are $p$-integrable in $\Omega$ along $(m-1)$-almost every such line
since $g$ and $\phi \circ F$ are in the class $L^p(\Omega)$.
%Indeed, $|DF| \geq |\partial^k F|$, and the integral of $|\partial^k F|$ over the set where $DF$ is defined equals the integral over the set where $\partial^k F$ is defined (since the difference between these sets is contained in a set of measure 0). By Fubini, the integral of $|\partial^k F|^p$ over all of $\Omega$ will be equal to the integral over $R$ followed by the integral over $R^{m-1}$. Thus, if the integral over a line is infinite over a set of $m-1$ positive measure, $|DF|$ would not be in $L^p$.

Choose a line $\bar{\ell}$ parallel to the $k^{th}$ coordinate axis 
that is disjoint from $\mathscr{C}$ 
and suppose that $g \in L^p(\bar{\ell} \cap \Omega)$, 
and $\phi \circ F \in L^p(\bar{\ell} \cap \Omega)$.
Write $\ell := \bar{\ell} \cap \Omega$.
We will now show that $\phi \circ F$ is locally Lipschitz along $\ell \setminus Z$
and its derivative along $\ell \setminus Z$ is $p$-integrable. 
Choose $x \in \ell \setminus Z$.
We need only consider the case when $x \in \partial \sigma_i$ 
for some $i \in \mathbb{N}$ since $F$ is locally Lipschitz on each $\sigma_i \setminus C_{\sigma_i}$.
In this case, for some $a,b \in \ell$, the segments $[a,x]$ and $[x,b]$ 
each lie entirely in some $m$-simplices $\sigma_{a}$ and $\sigma_{b}$ respectively. 
Since $F$ is locally Lipschitz when restricted to each of these simplices, 
it follows that $F$ is Lipschitz along some segment $I \subset [a,b]$ containing $x$. 
Therefore, $F$ is locally Lipschitz on $\ell \setminus Z$,
and hence $\phi \circ F$ is as well.
Now $\partial (\phi \circ F) / \partial x_k$ exists almost everywhere along $\ell \setminus Z$,
and the definition of $g$ gives
$$
\left| \frac{\partial (\phi \circ F)}{\partial x_k}(x) \right| 
%= \lim_{h \to 0} \frac{|\phi(F(x+h e_k)) - \phi(F(x))|}{|h|}
\leq K \left[ \limsup_{h \to 0} \frac{d(F(x+h e_k),F(x))}{|h|}\right]
\leq K \, g(x)
$$
for every $x \in \ell \setminus Z$ at which the partial derivative exists.
In particular, $\partial (\phi \circ F) / \partial x_k \in L^p(\ell \setminus Z)$.

Next, we will see that $\phi \circ F$ is in fact continuous along all of $\ell$.
By the previous paragraph, $F$ is continuous along $\ell$ at any $x \in \ell \setminus Z$.
Suppose now that $x \in \ell \cap Z$.
If $y \in \ell \cap Z$, 
then $d(F(x),F(y)) \leq L |x-y|$.
Suppose instead that $y \in \ell \setminus Z$.
Then $y \in \sigma_i$ for some $i \in \mathbb{N}$.
Choose a vertex $a$ of $\sigma_i$ so that $a$ and $P_{\sigma_i}(y)$ lie in the same $n$-face of $\sigma_i$.
Then
\begin{align*}
d(F(y),F(a)) &= d(\tilde{f}(P_{\sigma_i}(y)),\tilde{f}(a)) \\ 
&\leq L\tilde{C}|P_{\sigma_i}(y) - a| \leq L\tilde{C} \diam(\sigma_i) \leq L\tilde{C} d(\sigma_i,Z) \leq L\tilde{C}|x-y|. 
\end{align*}
Also, since $F(a)= f(z_a)$,
\begin{align*}
d(F(a),F(x)) &= d(f(z_a),f(x)) \leq L(|z_a-a|+|a-y|+|y-x|)  \\
&\leq L(d(a,Z) + \diam(\sigma_i) + |x-y|) \\
&\leq L((d(\sigma_i,Z) + \diam(\sigma_i)) + d(\sigma_i,Z) + |x-y| )
< 4L|x-y|.
\end{align*}
Therefore, 
\begin{equation}
\label{lip}
d(F(x),F(y)) \leq L(\tilde{C}+4)|x-y|
\end{equation}
for any $x \in \ell \cap Z$ and $y \in \ell$.
That is, $F$ is continuous on $\ell$, and so $\phi \circ F$ is as well.

Finally, we will show that $\phi \circ F$ is absolutely continuous on any compact interval in $\ell$ as desired.
Since $(\phi \circ f)|_{\ell \cap Z}$ is Lipschitz, 
we may use the classical McShane extension \cite{mcshane}
to find a Lipschitz extension $\psi:\ell \to \mathbb{R}$ of $(\phi \circ f) \big|_{\ell \cap Z}$.
Set $v := (\phi \circ F) - \psi$ on $\ell$.
Notice that $v'$ exists almost everywhere on $\ell \setminus Z$,
and $v' \in L^p(\ell \setminus Z)$.
Moreover, $v$ is continuous on $\ell$, 
is absolutely continuous on compact intervals in $\ell \setminus Z$,
and vanishes on $\ell \cap Z$.
Therefore, by defining
$$
 w(x) =
  \begin{cases} 
      \hfill v'(x)    \hfill & \text{ if $x \in \ell \setminus Z$ and $v'(x)$ exists} \\
      \hfill 0 \hfill & \text{ if $x \in \ell \cap Z$ or $v'(x)$ does not exist}, \\
  \end{cases}
$$
$v$ is the integral of $w$ over any interval in $\ell$.
Since $w$ is integrable on $\ell$, 
it follows that $v$ is absolutely continuous on compact intervals in $\ell$,
and so $\phi \circ F = v + \psi$ is as well.
Therefore, $\phi \circ F \in ACL^{p}(\Omega)$.

Furthermore,
the definition of $g$ together with \eqref{lip} gives
$|\partial(\phi \circ F) / \partial x_k| \leq Kg$ almost everywhere along $\ell$.
Hence, given any $K$-Lipschitz $\phi:Y \to \mathbb{R}$,
we have $\phi \circ F \in W^{1,p}(\Omega)$ and $|\partial (\phi \circ F) / \partial x_k| \leq K \, g$ 
almost everywhere in $\Omega$ for $k = 1 ,\dots, m$.
We may thus conclude that $F \in AR^{1,p}(\Omega,Y)$.
\end{proof}

We are now ready for the proof of Theorem~\ref{main}.
Recall from the discussion in the introduction that $\mathbb{H}^n$ is Lipschitz $(n-1)$-connected \cite{wenger_young}.
According to Proposition~\ref{heisenberg_sobolev}, 
we need only prove that the extension $F$ constructed in the previous proof is bounded as a mapping into $\mathbb{H}^n$
and then prove the desired quantitative estimates.

\begin{proof}[Proof of Theorem~\ref{main}]
Suppose $Y = \mathbb{H}^n$.
Fix $x \in \Omega$.
Notice that $\Vert F(\cdot)\Vert _K$ is bounded on $Z$ since $F \big{|}_Z=f$ is Lipschitz.
Also, $F$ is constant on $\mathscr{C}$.
It therefore suffices to consider $x \in \Omega \setminus (Z \cup \mathscr{C})$.
Hence $x \in \sigma$ for some $m$-simplex $\sigma \in \Sigma$.
Choose a vertex $a$ of $\sigma$ so that $a$ and $P_{\sigma}(x)$ lie in the same $n$-face of $\sigma$.
Then there is some $M>0$ independent of $x$ so that
$$
\Vert F(x)\Vert _K \leq Cd(F(x),F(a)) + \Vert F(a)\Vert _K \leq CL\tilde{C} \diam(\sigma)+\Vert f(z_a)\Vert _K < M
$$
where $C$ is the constant from the bi-Lipschitz equivalence of $d$ and $d_K$.
Thus $F \in AR^{1,p}(\Omega, \mathbb{H}^n)$ is bounded, 
so, by Proposition~\ref{heisenberg_sobolev},
$F \in W^{1,p}(\Omega,\mathbb{H}^n)$.

We now establish the quantitative estimate.
Recall that $\Vert g\Vert _{L^p(\Omega)} \leq CL (\diam(\Omega))^{m/p}$ 
where $g$ was defined in the proof of Theorem \ref{main2}.
Say $\phi_j:\mathbb{H}^n \to \mathbb{R}$ is the projection onto the $j^{th}$ coordinate.
We have that $\phi_j$ is 1-Lipschitz on $\mathbb{H}^n$ for $j=1,\dots,2n$.
Hence the definition of $AR^{1,p}(\Omega,\mathbb{H}^n)$ gives
$|\partial (\phi_j \circ F) / \partial x_k| \leq g$ almost everywhere on $\Omega$
for $j=1,\dots,2n$,
so $\Vert \partial F_j / \partial x_k\Vert _{L^p(\Omega)} \leq CL (\diam(\Omega))^{m/p}$ for $k = 1,\dots,m$ and $j=1,\dots,2n$.

As a final note, we will show that, in fact, $F \in ACL^p(\Omega,\mathbb{R}^{2n+1})$.
Recall that $\phi_j$ is 1-Lipschitz for $j=1,\dots,2n$.
Moreover, $\phi_{2n+1}$ is $K$-Lipschitz on $F(\Omega)$ 
for a constant $K$ depending on $\Omega$ and $F$ (by \eqref{equivalent}).
Use McShane's theorem \cite{mcshane} to extend $\phi_{2n+1}$ to a $K$-Lipschitz function (also called $\phi_{2n+1}$) defined on all of $\mathbb{H}^n$.
According to the proof of Theorem~\ref{main2},
we therefore have $F_j = \phi_j \circ F \in ACL^p(\Omega)$ for each $j$.
\end{proof}

We conclude the paper with the proof of Lemma~\ref{sobolev}.
As mentioned above, this proof is technical but elementary.

\begin{proof}
Suppose $\sigma$ is an $m$-simplex in $\Sigma$.
For the sake of notation, we will write 
$\Phi^k := \tilde{f} \circ P^{n+1} \circ \cdots \circ P^k$ for $k \in \{ n+1,\dots,m \}$
where each $P^k$ is the radial projection of $\Sigma_{\sigma}^{(k)} \setminus C^k$ 
to $\Sigma_{\sigma}^{(k-1)}$ as defined earlier.
As before, for $j=1,\dots,m$, 
say $\{ \sigma_i^j \}_{i=1}^{B(m,j)}$ is the collection of $j$-faces of $\sigma$.
We will prove this lemma by induction on the dimensions of the faces of $\sigma$.
In particular, we will use the Fubini theorem to bound the integral of the ``slope'' of $\Phi^k$
by a bound on the integral of the ``slope'' of $\Phi^{k-1}$.
This will allow us to bound the integral of $g$ 
(which is the ``slope'' of $\Phi^m = F$).

We begin with the $(n+1)$-faces of $\sigma$.
Suppose $x \in \sigma_i^{n+1} \setminus \{c_i^{n+1} \}$ for some $i \in \{1,\dots,B(m,n+1)\}$.
If $x \notin \partial \sigma_i^{n+1}$,
then for any $y \in \Sigma_{\sigma}^{(n+1)}$ close enough to $x$,
in fact $y \in \sigma_i^{n+1}$
and $P^{n+1}(x)$ and $P^{n+1}(y)$ lie in the same $n$-face of $\sigma_i^{n+1}$.
In this case \eqref{toplip2} gives
$$
\frac{d(\tilde{f}(P^{n+1}(x)), \tilde{f}(P^{n+1}(y)))}{|x-y|} 
\leq L\tilde{C} \, \frac{|P_i^{n+1}(x) - P_i^{n+1}(y)|}{|x-y|}
\leq \nu L \tilde{C} \, \frac{\diam(\sigma)}{|x-c_i^{n+1}|}.
$$
for $y \in \Sigma_{\sigma}^{(n+1)}$ close enough to $x$.
Since each $\partial \sigma_i^{n+1}$ has $\mathcal{H}^{n+1}$ measure zero,
\begin{align*}
\int_{\Sigma_{\sigma}^{(n+1)}} 
	&\limsup_{y \to x , \, y \in \Sigma_{\sigma}^{(n+1)}} 
	\frac{d(\Phi^{n+1}(x), \Phi^{n+1}(y))^p}{|x-y|^p} \, d\mathcal{H}^{n+1}(x) \\
&= \sum_{i=1}^{B(m,n+1)} \int_{\sigma_i^{n+1} \setminus \partial \sigma_i^{n+1}} 
	\limsup_{y \to x , \, y \in \sigma_i^{n+1}} 
	\frac{d(\tilde{f}(P^{n+1}(x)),\tilde{f}(P^{n+1}(y)))^p}{|x-y|^p} \, d\mathcal{H}^{n+1}(x) \\
&\leq (\nu L \tilde{C})^p \sum_{i=1}^{B(m,n+1)} \int_{\sigma_i^{n+1}} 
	\frac{\diam(\sigma)^p}{|x-c_i^{n+1}|^p} \, d\mathcal{H}^{n+1}(x).
\end{align*}
In what follows, the constant $C$ may change value between lines in the inequalities
but will depend only on $m$, $n$, $p$, and $\gamma$.
We first estimate the integral over each $(n+1)$-face of $\sigma$.
Since $p<n+1$, we have
$$
\int_{\sigma_i^{n+1}} \, \frac{1}{|x-c_i^{n+1}|^p} \, d\mathcal{H}^{n+1}(x)
\leq C \, \mathcal{H}^{n+1}\left(\sigma_i^{n+1} \right)^{1-\frac{p}{n+1}} 
\leq C \, \diam(\sigma)^{n+1-p}.
$$
Therefore, on the entire $(n+1)$-skeleton, we have
$$
\int_{\Sigma_{\sigma}^{(n+1)}} \limsup_{y \to x , \, y \in \Sigma_{\sigma}^{(n+1)}} 
\frac{d(\Phi^{n+1}(x),\Phi^{n+1}(y))^p}{|x-y|^p} \, d\mathcal{H}^{n+1}(x) 
\leq L^p \, C\, \diam(\sigma)^{n+1}.
$$

Now suppose $k \in \{ n+1,\dots,m-1\}$ satisfies the following
for a constant $C$ depending only on $m$, $n$, $p$, and $\gamma$:
$$
\int_{\Sigma_{\sigma}^{(k)}} \limsup_{y \to x , \, y \in \Sigma_{\sigma}^{(k)}} 
\frac{d(\Phi^{k}(x),\Phi^{k}(y))^p}{|x-y|^p} \, d\mathcal{H}^{k}(x) 
\leq L^p \, C \, \diam(\sigma)^{k}.
$$
We have as before
\begin{align*}
&\int_{\Sigma_{\sigma}^{(k+1)}} \limsup_{y \to x , \, y \in \Sigma_{\sigma}^{(k+1)}} \frac{d(\Phi^{k+1}(x),\Phi^{k+1}(y))^p}{|x-y|^p} \, d\mathcal{H}^{k+1}(x)  \\
&\leq \sum_{i=1}^{B(m,k+1)} \int_{\sigma_i^{k+1} \setminus \partial \sigma_i^{k+1}}
\limsup_{y \to x , \, y \in \sigma_i^{k+1} } \frac{d(\Phi^{k} (P^{k+1}(x)),\Phi^{k} (P^{k+1}(y)))^p}{|P^{k+1}(x) - P^{k+1}(y)|^p} 
\frac{|P^{k+1}(x) - P^{k+1}(y)|^p}{|x-y|^p} \, d\mathcal{H}^{k+1}(x). \\
\end{align*}
Fix $i \in \{ 1, \dots, B(m,k+1) \}$. 
As before, we estimate the integral over each $(k+1)$-face of $\sigma$.
Without loss of generality (after a translation), we may assume $\sigma_i^{k+1}$ is centered at the origin.
We thus have by \eqref{toplip2}
\begin{align*}
&\int_{\sigma_i^{k+1} \setminus \partial \sigma_i^{k+1}} \limsup_{y \to x , \, y \in \sigma_i^{k+1}} 
\frac{d(\Phi^{k} (P^{k+1}(x)),\Phi^{k} (P^{k+1}(y)))^p}{|P^{k+1}(x) - P^{k+1}(y)|^p} 
	\frac{|P^{k+1}(x) - P^{k+1}(y)|^p}{|x-y|^p} \, d\mathcal{H}^{k+1}(x) \\
%&\leq \int_{\sigma_i^{k+1} \setminus \partial \sigma_i^{k+1}} \limsup_{y \to x} \frac{|\Phi^{k} (P^{k+1}(x)) - \Phi^{k} (P^{k+1}(y))|^p}{|P^{k+1}(x) - P^{k+1}(y)|^p} \limsup_{y \to x} \frac{|P^{k+1}(x) - P^{k+1}(y)|^p}{|x-y|^p} \, d\mathcal{H}^{k+1}(x) \\
&\leq \nu^p \int_{\sigma_i^{k+1}} \limsup_{y \to x , \, y \in \sigma_i^{k+1}} \frac{d(\Phi^{k} (P^{k+1}(x)),\Phi^{k} (P^{k+1}(y)))^p}{|P^{k+1}(x) - P^{k+1}(y)|^p} \, 
	\frac{\diam(\sigma)^p}{|x|^p} \, d\mathcal{H}^{k+1}(x) \\
&\leq \nu^p \sum_{q =1}^{k+2} \int_{(P_i^{k+1})^{-1} (\sigma_{i_{q}}^{k})} 
	\limsup_{y \to x , \, y \in \sigma_i^{k+1}} \frac{d(\Phi^{k} (P^{k+1}(x)),\Phi^{k} (P^{k+1}(y)))^p}{|P^{k+1}(x) - P^{k+1}(y)|^p} \, \frac{\diam(\sigma)^p}{|x|^p} \, d\mathcal{H}^{k+1}(x) \\
\end{align*}
where $\sigma_{i_{1}}^{k}, \dots , \sigma_{i_{k+2}}^{k}$ are the $k$-dimensional faces of $\sigma_{i}^{k+1}$.
We will compute the integral of each summand in the last line.
Fix $q \in \{ 1, \dots, k+2 \}$.
The integral is invariant up to rotation, so we may assume without loss of generality that 
$\sigma_{i_{q}}^{k}$ is contained in the $k$-plane $\{b\} \times \mathbb{R}^k$.
%where $b >0$ is the distance from the barycenter of $\sigma_i^{k+1}$ to the plane containing $\sigma_{i_q}^k$.
Thus we may consider $(P_i^{k+1})^{-1} (\sigma_{i_{q}}^{k})$ a subset of $\mathbb{R}^{k+1}$.
%Notice that every point $x \in (P^{k+1})^{-1} (\sigma_{i_{q}}^{k})$ satisfies $x = \frac{t}{b} \hat{x}$ for some $\hat{x} \in \sigma_{i_{q}}^{k}$ and some $t \in (0,b]$.
%In particular, $(P^{k+1})^{-1} (\sigma_{i_{q}}^{k}) \subset \mathbb{R}^{k+1} \times \{0, \dots, 0 \}$.
Write $\hat{\sigma}_{i_q}^k = \{ \hat{z} \, | \, (b,\hat{z}) \in \sigma_{i_q}^k \} \subset \mathbb{R}^k$
so that 
$(P_i^{k+1})^{-1} (\sigma_{i_{q}}^{k}) 
= \{ (t,\hat{x}) \, | \, \hat{x} \in \frac{t}{b} \hat{\sigma}_{i_{q}}^{k}, \, t \in (0,b] \}$.
Thus since 
$$
\limsup_{y \to x , \, y \in \sigma_i^{k+1}} \frac{d(\Phi^{k} (P^{k+1}(x)),\Phi^{k} (P^{k+1}(y)))}{|P^{k+1}(x) - P^{k+1}(y)|}
\leq \limsup_{z \to P^{k+1}(x), \, z \in \sigma_{i_q}^k} \frac{d(\Phi^{k} (P^{k+1}(x)),\Phi^{k} (z))}{|P^{k+1}(x) - z|}
$$
for any $x \in (P_i^{k+1})^{-1} (\sigma_{i_q}^k \setminus \partial \sigma_{i_q}^k)$
%which is open as a subset of $\sigma_i^{k+1}$
and since $(P_i^{k+1})^{-1} (\partial \sigma_{i_q}^k)$ has $\mathcal{H}^{k+1}$ measure zero,
%and since $P^{k+1}(t,\hat{x},\bar{0}) = (b,\frac{b}{t}\hat{x},\bar{0})$ for any $(t,\hat{x},\bar{0}) \in (P_i^{k+1})^{-1} (\sigma_{i_{q}}^{k})$,
we have
\begin{align*}
\int_{(P_i^{k+1})^{-1} (\sigma_{i_{q}}^{k})} & \limsup_{y \to x , \, y \in \sigma_i^{k+1}} 
	\frac{d(\Phi^{k} (P^{k+1}(x)),\Phi^{k} (P^{k+1}(y)))^p}{|P^{k+1}(x) - P^{k+1}(y)|^p} \, 
	\frac{\diam(\sigma)^p}{|x|^p} \, d\mathcal{H}^{k+1}(x) \\
&\leq \int_0^b \int_{\frac{t}{b} \hat{\sigma}_{i_{q}}^{k}} 
	\limsup_{\hat{z} \to \frac{b}{t} \hat{x} , \, \hat{z} \in \hat{\sigma}_{i_q}^k} 
	\frac{d(\Phi^{k} (b,\frac{b}{t} \hat{x}),\Phi^{k} (b,\hat{z}))^p}{|(b,\frac{b}{t} \hat{x}) - (b,\hat{z})|^p} \, 
	\frac{\diam(\sigma)^p}{|(t,\hat{x})|^p} \, d\mathcal{H}^{k}(\hat{x}) \, dt \\
&= \int_0^b \int_{\hat{\sigma}_{i_{q}}^{k}} \left( \frac{t}{b} \right)^k 
	\limsup_{\hat{z} \to \hat{x}  , \, \hat{z} \in \hat{\sigma}_{i_q}^k} 
	\frac{d(\Phi^{k} (b,\hat{x}),\Phi^{k} (b,\hat{z} ))^p}{|(b,\hat{x}) - (b,\hat{z})|^p} \, 
	\frac{\diam(\sigma)^p}{\left( \frac{t}{b} \right)^p |(b, \hat{x})|^p} \, d\mathcal{H}^{k}(\hat{x}) \, dt \\
&\leq \int_0^b \left( \frac{t}{b} \right)^{k-p} \, dt \, \int_{\sigma_{i_{q}}^{k}} 
	\limsup_{z \to x  , \, z \in \sigma_{i_q}^k} 
	\frac{d(\Phi^{k} (x),\Phi^{k} (z))^p}{|x-z|^p} 
	\frac{\diam(\sigma)^p}{b^p} \, d\mathcal{H}^{k}(x) \\
&\leq \left( \frac{\diam(\sigma)}{b} \right)^p \, b \, L^p \, C \, \diam(\sigma)^{k}
\end{align*}
since $k-p>0$.
Since $b \geq \beta_{\sigma}$ and $b \leq \diam(\sigma)$, 
we may use \eqref{flat} to conclude
on the $(k+1)$-skeleton of $\sigma$
$$
\int_{\Sigma_{\sigma}^{(k+1)}} \limsup_{y \to x , \, y \in \Sigma_{\sigma}^{(k+1)}} 
	\frac{d(\Phi^{k+1}(x),\Phi^{k+1}(y))^p}{|x-y|^p} \, d\mathcal{H}^{k+1}(x) 
\leq L^p \, C \, \diam(\sigma)^{k+1}.
$$

By way of induction, then, we have
$$
\int_{\sigma} g(x)^p \, d \mathcal{H}^m(x) 
= \int_{\sigma \setminus \partial \sigma} \limsup_{y \to x} 
	\frac{d(\Phi^{m}(x),\Phi^{m}(y))^p}{|x-y|^p} \, d\mathcal{H}^{m}(x) 
\leq L^p \, C \, \diam(\sigma)^{m}
$$
since $\Sigma_{\sigma}^{(m)} = \sigma$ and $\Phi^m = F$ on $\sigma$.
Therefore, we have
\begin{align*}
\int_{\Omega \setminus Z} g(x)^p \, dx
\leq \sum_{i=1}^{\infty} \int_{\sigma_i} g(x)^p \, dx
\leq L^p \, C \, \sum_{i=1}^{\infty} \diam(\sigma_i)^{m}.
\end{align*}
The number of $m$-simplices in each cube in the Whitney decomposition of $\mathbb{R}^m \setminus Z$ 
is bounded by a constant $C$ depending only on $m$.
Hence
\begin{align*}
\sum_{i=1}^{\infty} \diam(\sigma_i)^{m} 
= \sum_{Q} \sum_{\sigma \subset Q} \diam(\sigma)^{m}
\leq \sum_{Q} \sum_{\sigma \subset Q} \diam(Q)^{m}
&\leq C \sum_{Q} \diam(Q)^{m} \\
&\leq C \mathcal{H}^m(\Sigma^{(m)})
\end{align*}
where these sums are taken over all cubes $Q$ in the Whitney decomposition that meet $\Omega$.
Notice that, for any $x,y \in \Sigma^{(m)}$ and cubes $Q_x$ and $Q_y$ containing them, 
we have
\begin{align*}
|x-y| 
&\leq \diam(Q_x) + d(Q_x,Q_y) + \diam(Q_y) \\
&\leq d(Q_x,Z) + d(Q_x,Q_y) + d(Q_y,Z)
\leq 3 \diam(\Omega).
\end{align*}
Therefore,
$
\mathcal{H}^m(\Sigma^{(m)}) \leq C \, \diam(\Sigma^{(m)})^m \leq C (\diam(\Omega))^m
$,
and so
$
\Vert g\Vert _{L^p(\Omega \setminus Z)} \leq CL (\diam(\Omega))^{m/p}
$
for a constant $C>0$ depending only on $m$, $n$, $p$, and the Lipschitz connectivity constant $\gamma$ of $Y$.
In particular, $g \in L^p(\Omega \setminus Z)$.
\end{proof}

\end{document}